\theoremstyle{definition}
 \newtheorem{definition}{Definition}[section]
\theoremstyle{plain}
\theoremstyle{plain}
 \newtheorem{theorem}[definition]{Theorem}
\theoremstyle{definition}
 \newtheorem{example}[definition]{Example}
\theoremstyle{plain}
 \newtheorem{lemma}[definition]{Lemma}
\theoremstyle{plain}
\theoremstyle{remark}
 \newtheorem{remark}[definition]{Remark}
\theoremstyle{definition}
\theoremstyle{plain}
\newcommand{\Ext}{\mathrm{Ext}}
\newcommand{\End}{\mathrm{End}}
\newcommand{\Hom}{\mathrm{Hom}}
\newcommand{\SHom}{\underline{\mathrm{Hom}}}
\newcommand{\Ca}{\mathcal{C}}
\newcommand{\Fun}{\mathrm{F}}
\newcommand{\Def}{\mathrm{Def}}
\newcommand{\Sets}{\mathrm{Sets}}
\newcommand{\Ob}{\mathrm{Ob}}
\newcommand{\SEnd}{\underline{\End}}
\newcommand{\A}{\Lambda}
\renewcommand{\k}{\Bbbk}
\title[Universal Deformation Rings of Gorenstein-projective Modules]{Universal Deformation Rings of Finitely Generated Gorenstein-Projective Modules over Finite Dimensional Algebras} 
\thanks{The third author was supported by the Release Time for Research Scholarship of the Office of Academic Affairs and by the Faculty 
Research Seed Grant funded by the Office of Sponsored Programs \& Research Administration at the Valdosta State University. The authors were partly supported by CODI and Estrategia  de Sostenibilidad (Universidad de Antioquia, UdeA) and Colciencias-Ecopetrol (no. 0266-2013)}
\author{Viktor Bekkert}
\address{Departamento de Matem\'atica, Instituto de Ci\^encias Exatas, Universidade Federal de Minas Gerais }
\email{bekkert@mat.ufmg.br}  
\author{Hern\'an Giraldo}
\address{Instituto de Matem\'aticas, Universidad de Antioquia, Medell{\'\i}n, Antioquia, Colombia}
\email{hernan.giraldo@udea.edu.co}  
\author{Jos\'e A. V\'elez-Marulanda}
\address{Department of Mathematics, Valdosta State University, Valdosta, GA, U.S.A.}
\email{javelezmarulanda@valdosta.edu (Corresponding author)}
\keywords{Universal deformation rings \and Stable endomorphism rings \and Finitely generated Gorenstein-projective modules}
\begin{document}
\renewcommand{\labelenumi}{\textup{(\roman{enumi})}}
\renewcommand{\labelenumii}{\textup{(\roman{enumi}.\alph{enumii})}}
\numberwithin{equation}{section}

%\halfspacing

\begin{abstract}
Let $\k$ be a field of arbitrary characteristic, let $\A$ be a finite dimensional $\k$-algebra, and let $V$ be a finitely generated $\A$-module. F. M. Bleher and the third author previously proved that $V$ has a well-defined versal deformation ring $R(\A,V)$. If the stable endomorphism ring of $V$ is isomorphic to $\k$, they also proved under the additional assumption that $\A$ is self-injective that $R(\A,V)$ is universal. In this paper, we prove instead that if $\A$ is arbitrary but $V$ is Gorenstein-projective then $R(\A,V)$ is also universal when the stable endomorphism ring of $V$ is isomorphic to $\k$. Moreover, we show that singular equivalences of Morita type (as introduced by X. W. Chen and L. G. Sun) preserve the isomorphism classes of versal deformation rings of finitely generated Gorenstein-projective modules over Gorenstein algebras. We also provide examples. In particular, if $\A$ is a monomial algebra in which there is no overlap (as introduced by X. W. Chen, D. Shen and G. Zhou) we prove that every finitely generated indecomposable Gorenstein-projective $\A$-module has a universal deformation ring that is isomorphic to either $\k$ or to $\k[\![t]\!]/(t^2)$.
\end{abstract}
\subjclass[2010]{16G10 \and 16G20 \and 20C20}
\maketitle
%\tableofcontents

\section{Introduction}\label{int}

Throughout this article, we assume that $\k$ is a fixed field of arbitrary characteristic. We denote by $\hat{\Ca}$ the category of all complete local commutative Noetherian $\k$-algebras with residue field $\k$. In particular, the morphisms in $\hat{\Ca}$ are continuous $\k$-algebra homomorphisms that induce the identity map on $\k$.  Let $\A$ be a fixed finite dimensional $\k$-algebra. For all objects $R\in \Ob(\hat{\Ca})$, we denote by $R\A$ the tensor product of $\k$-algebras $R\otimes_\k\A$. Note that $R\A$ is an $R$-algebra, and if $R$ is an Artinian ring then $R\A$ is also Artinian (both on the left and the right). Let $V$ be a finitely generated left $\A$-module. We denote by $\End_\A(V)$ (resp. by $\SEnd_\A(V)$) the 
endomorphism ring (resp. the stable endomorphism ring) of $V$. We denote by $\Omega V$ the first syzygy of $V$, i.e. $\Omega V$ is the kernel of a projective cover $P(V)\to V$ of $V$ over $\A$, which is unique up to isomorphism. Let $R$ be an arbitrary object in $\hat{\Ca}$. A {\it lift} of $V$ over $R$ is an $R\A$-module $M$ that is free over $R$ together with a $\A$-module isomorphism $\phi: \k\otimes_RM\to V$.   A {\it deformation} of $V$ over $R$ is defined to be an isomorphism class of lifts of $V$ over $R$.  In \cite{blehervelez}, F. M. Bleher and the third author studied deformations and deformation rings of modules for arbitrary finite dimensional $\k$ algebras. In particular, they proved that when $\A$ is a self-injective algebra (i.e. $\A$ is injective as a left $\A$-module) and $V$ is a $\A$-module with finite dimension over $\k$ such that $\SEnd_\A(V)\cong \k$, then $V$ has a universal deformation ring $R(\A,V)$ that is stable under taking syzygies provided that $\A$ is further a Frobenius algebra (i.e. $\A$ and $\Hom_\k(\A,\k)$ are isomorphic as right $\A$-modules). The results in \cite{blehervelez} were used in \cite{bleher9,blehervelez,velez} to study universal deformation rings for certain self-injective algebras which are not Morita equivalent to a block of a group algebra. More recently, it was proved in \cite[Prop. 3.2.6]{blehervelez2} that the isomorphism class of versal deformation rings of modules is preserved by stable equivalences of Morita type (as introduced by M. Brou\'e in \cite{broue}) between self-injective $\k$-algebras. Moreover, in \cite{bleher15}, F. M. Bleher and D. J. Wackwitz studied universal deformation rings of modules over self-injective Nakayama $\k$-algebras. 

Our aim is to study lifts of finitely generated Gorenstein-projective modules over finite dimensional $\k$-algebras.

Following \cite{enochs0,enochs}, we say that a (not necessarily finitely generated) left $\A$-module $W$ is {\it Gorenstein-projective} provided that there exists an acyclic complex of (not necessarily finitely generated) projective left $\A$-modules 
\begin{equation*}
P^\bullet: \cdots\to P^{-2}\xrightarrow{f^{-2}} P^{-1}\xrightarrow{f^{-1}} P^0\xrightarrow{f^0}P^1\xrightarrow{f^1}P^2\to\cdots
\end{equation*}  
such that $\Hom_\A(P^\bullet, \A)$ is also acyclic and $W=\mathrm{coker}\,f^0$.

Following \cite{auslander4} and \cite{avramov2}, we say that the finitely generated left $\A$-module $V$ is of {\it Gorenstein dimension zero} or {\it totally reflexive} provided that $V$ and $\Hom_\A(\Hom_\A(V,\A),\A)$ are isomorphic as left $\A$-modules, and that $\Ext_\A^i(V,\A)=0=\Ext_\A^i(\Hom_\A(V,\A),\A)$
for all $i>0$. It is well-known that finitely generated Gorenstein-projective left $\A$-modules coincide with those that are totally reflexive (see e.g. \cite[\S 2.4]{avramov2}). Following \cite{buchweitz}, we say that $V$ is a {\it (maximal) Cohen-Macaulay} $\A$-module provided that $\Ext_\A^i(V,\A)=0$ for all $i>0$. Recall that $\A$ is said to be a {\it Gorenstein} $\k$-algebra provided that $\A$ has finite injective dimension as a left and right $\A$-module (see \cite{auslander2}). In particular, algebras of finite global dimension as well as self-injective algebras are Gorenstein. It follows from \cite[Prop. 4.1]{auslander3} that if $\A$ is a Gorenstein $\k$-algebra, then finitely generated Gorenstein-projective and (maximal) Cohen-Macaulay left $\A$-modules coincide. However, it follows from an example given by J.I. Miyachi (see \cite[Example A.3]{hoshino-koga}) that in general not all (maximal) Cohen-Macaulay modules over a finite dimensional algebra are Gorenstein-projective. 

The singularity category $\mathcal{D}_{\mathrm{sg}}(\A\textup{-mod})$ of $\A$ is the Verdier quotient of the bounded derived 
category of finitely generated left $\A$-modules $\mathcal{D}^b(\A\textup{-mod})$ by the full subcategory $\mathcal{K}^b(\A\textup{-proj})$ of perfect complexes (see 
\cite{verdier} and e.g. \cite{krause3} for the construction of this quotient). If $\A$ is self-injective, then it follows from \cite[Thm. 2.1]{rickard1} that $\mathcal{D}_{\mathrm{sg}}(\A\textup{-mod})$ is equivalent as a triangulated category to $\A\textup{-\underline{mod}}$, the 
stable category of finitely generated left $\A$-modules. If $\A$ is Gorenstein, then it follows from \cite[Thm. 4.4.1]{buchweitz} (see also \cite[\S4.6]{happel3} in the case when $\k$ is algebraically closed) that $\mathcal{D}_{\mathrm{sg}}(\A\textup{-mod})$ is equivalent as a triangulated 
category to $\A\textup{-\underline{Gproj}}$ the stable category of finitely generated Gorenstein-projective left $\A$-modules. In particular, if $\A$ has finite global dimension, then its singularity category is trivial.

The following definition was introduced by X. W. Chen and L. G. Sun in \cite{chensun}, which was further studied by G. Zhou and A. Zimmermann in \cite{zhouzimm}, as a way of generalizing the concept of stable equivalence of Morita type.  
\begin{definition}\label{defi:3.2}
Let $\A$ and $\Gamma$ be finite dimensional $\k$-algebras, and let $X$ be a $\Gamma$-$\A$-bimodule and $Y$ a $\A$-$\Gamma$-bimodule. We say that $X$ and $Y$ induce a {\it singular equivalence of Morita type} between $\A$ and $\Gamma$ (and that $\A$ and $
\Gamma$ are {\it singularly equivalent of Morita type}) if the following conditions are satisfied:
\begin{enumerate}
\item $X$ is finitely generated and projective as a left $\Gamma$-module and as a right $\A$-module.
\item $Y$ is finitely generated and projective as a left $\A$-module and as a right $\Gamma$-module. 
\item There is a finitely generated $\Gamma$-$\Gamma$-bimodule $Q$ with finite projective dimension such that $X\otimes_\A Y\cong 
\Gamma \oplus Q$ as $\Gamma$-$\Gamma$-bimodules.
\item There is a finitely generated $\A$-$\A$-bimodule $P$ with finite projective dimension such that $Y\otimes_\Gamma X\cong \A\oplus P
$ as $\A$-$\A$-bimodules.
\end{enumerate}
\end{definition}
It follows from \cite[Prop. 2.3]{zhouzimm} that singular equivalences of Morita type induce equivalences of singularity categories.

The goal of this article is to prove the following result.

\begin{theorem}\label{thm01}
Let $\A$ be a finite dimensional $\k$-algebra and let $V$ be a non-zero finitely generated Gorenstein-projective left $\A$-module. 
\begin{enumerate}
\item For all finitely generated projective left $\A$-modules $P$, the versal deformation rings $R(\A,V)$ and $R(\A,V\oplus P)$ are isomorphic in $\hat{\Ca}$. 
\item If $\SEnd_\A(V)=\k$, then the versal deformation ring $R(\A,V)$ is universal.  In particular, the versal deformation ring $R(\A, \Omega V)$ of $\Omega V$ is also universal.  
\item Assume that $\A$ is Gorenstein and let $\Gamma$ be another finite dimensional Gorenstein $\k$-algebra. Assume that ${_\Gamma}X_\A$, ${_\A} Y_\Gamma$ are bimodules that induce a singular equivalence of Morita type as in Definition \ref{defi:3.2} between $\A$ and $\Gamma$. Then $X\otimes_\A V$ is a finitely generated Gorenstein-projective left $\Gamma$-module, and the versal deformation rings $R(\A, V)$ and $R(\Gamma, X\otimes_\A V)$ are isomorphic in $\hat{\Ca}$. 
\end{enumerate} 
\end{theorem}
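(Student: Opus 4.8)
The plan is to treat the three parts separately, since they draw on rather different ideas.

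For part (i), the key observation is that a lift of $V\oplus P$ over $R\in\Ob(\hat{\Ca})$ should decompose as a lift of $V$ direct-sum a lift of $P$, and since $P$ is projective it has (up to isomorphism) a unique lift, namely $R\otimes_\k P = RP$, which is rigid. Concretely, I would first show that $RP$ is the unique lift of $P$ over $R$ by a standard lifting-idempotents argument: $P$ is a summand of a free module, the corresponding idempotent in $\Mat_n(\A)$ lifts uniquely along the nilpotent surjection $R\A\surjection\k\A$ because $R$ is complete local, so $\Def_\A(P,R)$ is a single point. Then I would construct mutually inverse natural transformations $\Def_\A(V,-)\to\Def_\A(V\oplus P,-)$, $M\mapsto M\oplus RP$, and in the other direction use that any lift $N$ of $V\oplus P$ over $R$ carries a lifted idempotent splitting off a copy of $RP$, whose complement is a lift of $V$; one must check this complement is well-defined on deformation classes and that the maps are natural in $R$. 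Isomorphic deformation functors have isomorphic (v)ersal deformation rings, giving $R(\A,V)\cong R(\A,V\oplus P)$. The mild technical point here is checking that the splitting of $N$ is natural with respect to isomorphisms of lifts, but this is routine idempotent bookkeeping.

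For part (ii), the heart is Gorenstein-projectivity. By \cite{blehervelez}, $R(\A,V)$ exists as a versal deformation ring, and versality upgrades to universality precisely when the relevant obstruction/tangent bookkeeping is controlled — in the self-injective case of \cite{blehervelez} this followed from $\Ext^1_\A(V,V)$ governing the tangent space together with an argument using $\underline{\End}_\A(V)\cong\k$; I expect the proof to parallel that one with ``self-injective'' replaced by ``$V$ Gorenstein-projective''. The point is that for a Gorenstein-projective $V$ one has a complete resolution $P^\bullet$ with $\Hom_\A(P^\bullet,\A)$ acyclic, and $\underline{\End}_\A(V)$ and the relevant $\underline{\Hom}$-groups can be computed via $P^\bullet$ in a way that makes them behave like stable homs over a self-injective algebra; in particular one gets that if $M$, $M'$ are lifts of $V$ over an Artinian $R$ and $\alpha\colon M\to M'$ reduces to an isomorphism modulo $\m_R$, then $\alpha$ is an isomorphism, and more importantly that the map $\Hom_{R\A}(M,M)\to\Hom_\A(V,V)$ is surjective with the ``stable part'' vanishing, forcing any two lifts that are abstractly isomorphic to be isomorphic by a map lifting $\1_V$. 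This rigidity of the identity-inducing isomorphisms is exactly what promotes the versal ring to a universal one (cf. the uniqueness criterion in \cite{blehervelez}). I would then deduce the statement for $\Omega V$ by noting $\Omega V$ is again finitely generated Gorenstein-projective with $\underline{\End}_\A(\Omega V)\cong\underline{\End}_\A(V)\cong\k$ (the syzygy functor is an autoequivalence on $\A\text{-}\underline{\mathrm{Gproj}}$), so part (ii) applies to it directly. The main obstacle in (ii) is verifying this surjectivity/vanishing statement for the stable endomorphisms of lifts over Artinian quotients; I would handle it by devissage along the $\m_R$-adic filtration, using long exact $\Ext$-sequences and the Cohen-Macaulay vanishing $\Ext^i_\A(V,\A)=0$ for $i>0$ to kill the error terms.

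For part (iii), I would argue as follows. Since $\A$, $\Gamma$ are Gorenstein, finitely generated Gorenstein-projective modules coincide with (maximal) Cohen-Macaulay modules and the functor $X\otimes_\A-\colon\A\text{-mod}\to\Gamma\text{-mod}$ is exact (as $X$ is projective as a right $\A$-module) and sends projectives to projectives (as $X$ is projective as a left $\Gamma$-module); hence it sends a complete resolution of $V$ to an acyclic complex of projective $\Gamma$-modules, and applying $\Hom_\Gamma(-,\Gamma)$ and using the bimodule structure one checks the dual complex stays acyclic, so $X\otimes_\A V$ is Gorenstein-projective over $\Gamma$. Next, because $X$ is right-$\A$-projective, $RX:=R\otimes_\k X$ is flat as a right $R\A$-module, so $RX\otimes_{R\A}-$ sends lifts of $V$ over $R$ to lifts of $X\otimes_\A V$ over $R$ (freeness over $R$ is preserved, and $\k\otimes_R(RX\otimes_{R\A}M)\cong X\otimes_\A(\k\otimes_R M)\cong X\otimes_\A V$); symmetrically $RY\otimes_{R\Gamma}-$ goes back. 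Conditions (iii)–(iv) of Definition \ref{defi:3.2} give $Y\otimes_\Gamma X\cong\A\oplus P$ with $P$ of finite projective dimension, and since $V$ is Cohen-Macaulay one shows $P\otimes_\A V$ (or rather the relevant Tor-contribution) is projective, so that $RY\otimes_{R\Gamma}RX\otimes_{R\A}M\cong M\oplus(\text{projective lift})$; combined with part (i) this yields that the two deformation functors $\Def_\A(V,-)$ and $\Def_\Gamma(X\otimes_\A V,-)$ are isomorphic, hence $R(\A,V)\cong R(\Gamma,X\otimes_\A V)$ in $\hat{\Ca}$. The main obstacle in (iii) is controlling the finite-projective-dimension bimodule $P$ after tensoring with $V$ and base-changing to $R$: one needs that $P\otimes_\A V$, as a module over $R\A$, contributes only a projective (hence, by part (i), negligible) summand to the lift, which I would establish by combining the Cohen-Macaulay vanishing for $V$ with a projective-dimension induction on $P$ and the fact that everything is $R$-free.
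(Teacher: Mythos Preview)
Your overall plan matches the paper's: part (i) is Lemma~\ref{lemma3}, part (ii) is Theorem~\ref{thm1}, and part (iii) is Theorem~\ref{prop2.7}, each obtained by rerunning the corresponding argument from \cite{blehervelez,blehervelez2} with the self-injective hypothesis replaced by the vanishing $\Ext^i_\A(V,\A)=0$ coming from Gorenstein-projectivity. Your sketches of (ii) and (iii) are essentially what the paper does. One small difference in (iii): rather than your projective-dimension induction, the paper gets projectivity of $P\otimes_\A V$ in one stroke by noting (Remark~\ref{rem:4.2}(i)) that $Y\otimes_\Gamma(X\otimes_\A V)\cong V\oplus(P\otimes_\A V)$ and $V$ are isomorphic in $\A\textup{-\underline{Gproj}}$ because $X\otimes_\A-$ and $Y\otimes_\Gamma-$ are inverse equivalences there, so the extra summand must be projective.

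There is, however, a genuine gap in your treatment of (i). You call the splitting of a lift $N$ of $V\oplus P$ ``routine idempotent bookkeeping'', but this is exactly the step where the Gorenstein-projective hypothesis on $V$ enters, and without it the statement is false. Lifting the idempotent $e\in\End_\A(V\oplus P)$ that projects onto $P$ requires first knowing that $e$ is in the image of $\End_{R\A}(N)\to\End_\A(V\oplus P)$; concretely, one must lift the projection $V\oplus P\to P$ to a map $N\to \mathrm{Proj}_R(P)$. Along a small extension $R\to R_0$ with kernel $tR$, the obstruction to this lift lies in $\Ext^1_{R\A}(N,tQ)\cong\Ext^1_\A(\k\otimes_R N,\k\otimes_R Q)$, and the relevant piece is $\Ext^1_\A(V,P)$, which vanishes precisely by Lemma~\ref{lemma:3.1}(ii). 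The paper isolates this as Lemma~\ref{claim1}(i) and~(iii), and it is the only non-formal input in the proof of (i). To see the hypothesis is essential, take $\A$ the path algebra of $1\to 2\to 3$, $V=S_2$, and $P=P_3=S_3$: then $\Ext^1_\A(V,V)=0$ but $\Ext^1_\A(V,P)=\k$, so the tangent space to $\hat{\Fun}_{V\oplus P}$ is one-dimensional while that of $\hat{\Fun}_V$ is zero, and $R(\A,V\oplus P)\not\cong R(\A,V)$. In short, the idempotent does not lift for free; the lifting is powered by the same Cohen--Macaulay vanishing you correctly invoke in part~(ii), and you should invoke it here as well.
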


Since every finitely generated module over a self-injective algebra is Gorenstein-projective and since self-injective algebras are Gorenstein, it follows that parts (i), (ii) and (iii) of Theorem \ref{thm01} provide a generalization of 
\cite[Lemma 3.2.2]{blehervelez2}, \cite[Thm. 2.6(ii)]{blehervelez} and \cite[Prop. 3.2.6]{blehervelez2}, respectively. 

This article is organized as follows. In \S \ref{sec2}, we review some preliminary definitions and properties concerning deformations and (uni)versal deformation rings of modules over finite dimensional algebras as discussed in \cite{blehervelez}, and we review some basic facts concerning finitely generated Gorenstein-projective modules. In \S \ref{section3} we prove Theorem \ref{thm01} (i) and (ii) by carefully adapting some of the ideas in the proofs of \cite[Thm. 2.6]{blehervelez} and \cite[Lemma 3.2.2]{blehervelez2} to our context. In \S \ref{section4}, we prove Theorem \ref{thm01} (iii) by adapting the ideas in the proof of \cite[Prop. 3.2.6]{blehervelez2} to our context.  Finally in \S \ref{section5}, we prove that every finitely generated indecomposable Gorenstein-projective module over a monomial algebra in which there is no overlap (as introduced in \cite{chen-shen-zhou}) has a universal deformation ring, which is isomorphic either to $\k$ or to $\k[\![t]\!]/(t^2)$ (see Theorem \ref{thmmon}). Moreover, we illustrate part (iii) of Theorem \ref{thm01} by discussing an example of two finite dimensional Gorenstein $\k$-algebras $\A$ and $\Gamma$, where both are non-self-injective of infinite global dimension such that $\A$ and $\Gamma$ are stably equivalent of Morita type, and thus singularly equivalent of Morita type as in Definition \ref{defi:3.2} (see Example \ref{exam1}).

For basic concepts concerning Gorenstein-projective modules, we refer the reader to  \cite{auslander3,holmH} (and their references). For basic concepts from the representation theory of algebras such as projective covers, syzygies of modules, stable categories and homological dimension of modules over finite dimensional algebras, we refer the reader to \cite{auslander,curtis,weibel}.

\section{Preliminaries}\label{sec2}
Throughout this section we keep the notation introduced in \S \ref{int}. In particular, $\A$ is a finite dimensional $\k$-algebra, $V$ is a finitely generated $\A$-module, and $R$ is a ring in the category $\hat{\Ca}$.
\subsection{Lifts, deformations, and (uni)versal deformation rings}\label{sec21}

A {\it lift} $(M,\phi)$ 
of $V$ over $R$ is a finitely generated left $R\A$-module $M$ 
that is free over $R$ 
together with an isomorphism of $\A$-modules $\phi:\k\otimes_RM\to V$. Two lifts $(M,\phi)$ and $(M',\phi')$ over $R$ are {\it isomorphic} 
if there exists an $R\A$-module 
isomorphism $f:M\to M'$ such that $\phi'\circ (\k\otimes_R f)=\phi$.
If $(M,\phi)$ is a lift of $V$ over $R$, we  denote by $[M,\phi]$ its isomorphism class and say that $[M,\phi]$ is a {\it deformation} of $V$ 
over $R$. We denote by $\Def_\A(V,R)$ the 
set of all deformations of $V$ over $R$. The {\it deformation functor} corresponding to $V$ is the 
covariant functor $\hat{\Fun}_V:\hat{\Ca}\to \Sets$ defined as follows: for all objects $R$ in $\Ob(\hat{\Ca})$, define $\hat{\Fun}_V(R)=\Def_
\A(V,R)$, and for all morphisms $\alpha:R\to 
R'$ in $\hat{\Ca}$, 
let $\hat{\Fun}_V(\alpha):\Def_\A(V,R)\to \Def_\A(V,R')$ be defined as $\hat{\Fun}_V(\alpha)([M,\phi])=[R'\otimes_{R,\alpha}M,\phi_\alpha]$, 
where $\phi_\alpha: \k\otimes_{R'}
(R'\otimes_{R,\alpha}M)\to V$ is the composition of $\A$-module isomorphisms 
\[\k\otimes_{R'}(R'\otimes_{R,\alpha}M)\cong \k\otimes_RM\xrightarrow{\phi} V.\]  

\begin{remark}\label{rem1}
Some authors consider a weaker notion of deformations. Namely, let $\A$ and $V$ be as above, let $R$ be an object in $\hat{\Ca}$ and let $(M,\phi)$ be a lift of $V$ over $R$. Then the isomorphism class $[M]$ of $M$ as an $R\A$-module is called a {\it weak deformation} of $V$ over $R$ (see e.g. \cite[\S 5.2]{keller} and \cite[Remark 2.4]{blehervelez}). We can also define the weak deformation functor $\hat{\Fun}_V^w: \hat{\Ca}\to\Sets$ which sends an object $R$ in $\hat{\Ca}$ to the set of
weak deformations of $V$ over $R$  and a morphism  $\alpha:R \to R'$  in $\hat{\Ca}$ to the map  $\hat{\Fun}_V^w :\hat{\Fun}_V^w (R)\to \hat{\Fun}_V^w(R')$, which is defined by $\hat{\Fun}_V^w(\alpha)([M]) = [R'\otimes_{R,\alpha}M]$.
In general, a weak deformation of $V$ over $R$ identifies more lifts than a deformation of $V$ over $R$ that respects the isomorphism $\phi$ of a representative $(M,\phi)$. 
\end{remark}

Suppose there exists an object $R(\A,V)$ in $\Ob(\hat{\Ca})$  and a deformation $[U(\A,V), \phi_{U(\A,V)}]$ of $V$ over $R(\A,V)$ with the 
following property. For each $R$ in $\Ob(\hat{\Ca})$ and for all deformations $[M,\phi]$ of $V$ over $R$, there exists a morphism $\psi_{R(\A,V),R,[M,\phi]}:R(\A,V)\to R$ 
in $\hat{\Ca}$ such that 
\[\hat{\Fun}_V(\psi_{R(\A,V),R,[M,\phi]})[U(\A,V), \phi_{U(\A,V)}]=[M,\phi],\]
and moreover, $\psi_{R(\A,V),R,[M,\phi]}$ is unique if $R$ is the ring of dual numbers $\k[\epsilon]$ with $\epsilon^2=0$.  Then $R(\A,V)$ and $
[U(\A,V),\phi_{U(\A,V)}]$ are called the {\it versal deformation ring} and {\it versal deformation} of $V$, respectively. If the morphism $
\psi_{R(\A,V),R,[M,\phi]}$ is unique for all $R\in\Ob(\hat{\Ca})$ and deformations $[M,\phi]$ of $V$ over $R$, then $R(\A,V)$ and $[U(\A,V),\phi_{U(\A,V)}]$ are 
called the {\it universal deformation ring} and the {\it universal deformation} of $V$, respectively.  In other words, the universal deformation 
ring $R(\A,V)$ represents the deformation functor $\hat{\Fun}_V$ in the sense that $\hat{\Fun}_V$ is naturally isomorphic to the $\Hom$ 
functor $\Hom_{\hat{\Ca}}(R(\A,V),-)$. By \cite[Prop. 2.1]{blehervelez} every finitely generated $\A$-module $V$ has a versal deformation ring $R(\A,V)$. Moreover, if $\End_\A(V)$ is isomorphic to $\k$, then $R(\A,V)$ is universal. Additionally, \cite[Prop. 2.5]{blehervelez2} proves that Morita equivalences preserve isomorphism classes of versal deformation rings. 
\begin{remark}\label{selfuni}
Let $\A$ be a self-injective $\k$-algebra and let $V$ be a finitely generated non-zero left $\A$-module.
\begin{enumerate}
\item It follows from \cite[Thm. 2.6 (ii)]{blehervelez}  that if the stable endomorphism ring of $V$ is isomorphic to $\k$, then the versal deformation ring $R(\A,V)$ is universal.
%\item It follows from \cite[Lemma 3.2.2]{blehervelez2} that the versal deformation rings  $R(\A,V)$ and $R(\A,V\oplus P)$ are isomorphic in $\hat{\Ca}$ for all finitely generated 
%projective left $\A$-modules $P$. 
\item If $\A$ is further a Frobenius $\k$-algebra and $V$ is non-projective, then it follows from \cite[Prop. 2.4]{bleher15} that the versal deformation rings $R(\A,V)$ and $R(\A,\Omega V)$ are isomorphic in $\hat{\Ca}$. Moreover $R(\A, \Omega V)$ is universal if and only if $R(\A,V)$ is universal. Note that this result improves \cite[Thm. 2.6 (iv)]{blehervelez}. 
\end{enumerate}
\end{remark}

\subsection{Some properties of finitely generated Gorenstein-projective modules}

We denote by $\A$-mod the category of finitely generated left $\A$-modules, and by $\A$-\underline{mod} its stable category. We denote by $\A$-Gproj (resp. by $\A$-\underline{Gproj}) the full subcategory of $\A$-mod (resp. of $\A$-\underline{mod}) consisting of finitely generated Gorenstein-projective left $\A$-modules. 

We need the following result that summarizes some properties of finitely generated Gorenstein-projective left $\A$-modules.

\begin{lemma}\label{lemma:3.1}
Let $V$ be an object in $\A\textup{-Gproj}$. 
\begin{enumerate}
\item For all $i\geq 0$, the $i$-th syzygy $\Omega^iV$ of $V$ is also an object in $\A\textup{-Gproj}$.
\item If $P$ is a left $\A$-module with finite projective dimension, then $\Ext_\A^i(V,P)=0$ for all $i>0$.
\item $V$ has finite projective dimension if and only if $V$ is a projective module.
\item Assume that $\A$ is Gorenstein with injective dimension as a left $\A$-module equal to $d\geq 0$. Then $
\A\textup{-Gproj}=\Omega^d(\A\textup{-mod})$. 
\item $\A$-\textup{Gproj} is a Frobenius category in the sense of \cite[Chap. I, \S 2.1]{happel}.
\item $\Omega$ induces an autoequivalence $\Omega:\A\textup{-\underline{Gproj}}\to \A\textup{-\underline{Gproj}}$.

\end{enumerate}
\end{lemma}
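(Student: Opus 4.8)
The plan is to verify the six assertions in sequence, since each one builds on the previous ones and all of them are essentially standard facts about the Frobenius exact structure on $\A$-Gproj; the work consists in assembling the right references and checking that the hypotheses match.

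First, for (i), I would argue directly from the definition of Gorenstein-projective via a complete resolution. If $W$ is an object in $\A$-Gproj, pick a totally acyclic complex $P^\bullet$ of finitely generated projectives with $W=\mathrm{coker}\,f^0$; then $\Omega W=\mathrm{coker}\,f^{-1}$ is the cokernel appearing one step to the left in the same complex, so the shifted complex witnesses that $\Omega W$ is again Gorenstein-projective. Iterating gives $\Omega^i W\in\A$-Gproj for all $i\ge 0$. (One should note that $\Omega W$ is well-defined up to projective summands, but since projectives are trivially Gorenstein-projective this causes no trouble.) For (ii), I would use the totally reflexive characterization: since $\Ext^i_\A(V,\A)=0$ for $i>0$ and the same holds for finite direct sums of copies of $\A$, a dimension-shifting induction on the projective dimension of $P$ using a short exact sequence $0\to P'\to \A^n\to P\to 0$ (with $\mathrm{pd}\,P'<\mathrm{pd}\,P$) yields $\Ext^i_\A(V,P)=0$ for all $i>0$; alternatively cite \cite[\S 2.4]{avramov2} or \cite{holmH} directly.

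For (iii), the nontrivial direction is that finite projective dimension forces projectivity. If $\mathrm{pd}\,V=n<\infty$ and $V$ is Gorenstein-projective, then $\Omega^n V$ is projective by definition of projective dimension, while by (i) $\Omega^{n-1}V$ is Gorenstein-projective; the defining short exact sequence $0\to \Omega^n V\to P^{(n-1)}\to \Omega^{n-1}V\to 0$ together with (ii) applied to the finite-projective-dimension module $\Omega^n V$ (or rather the vanishing of $\Ext^1_\A(\Omega^{n-1}V,\Omega^n V)$) shows the sequence splits, so $\Omega^{n-1}V$ is projective, and descending induction gives $V$ projective. For (iv), when $\A$ is Gorenstein of injective dimension $d$ on both sides, every finitely generated module $M$ has $\Omega^d M$ of finite projective dimension removed — more precisely $\Omega^d M$ is Cohen--Macaulay, hence Gorenstein-projective by \cite[Prop. 4.1]{auslander3} (cited in the introduction), giving $\Omega^d(\A\text{-mod})\subseteq\A$-Gproj; the reverse inclusion follows because any $V\in\A$-Gproj equals $\Omega^d(\Omega^{-d}V)$ using that $\Omega$ is invertible on the stable Gorenstein-projective category, or more elementarily because $V\cong\Omega^d W$ for $W$ the appropriate cosyzygy inside the complete resolution.

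For (v), I would check the Frobenius category axioms for $\A$-Gproj with the exact structure given by short exact sequences with all three terms in $\A$-Gproj (this is an exact subcategory of $\A$-mod closed under extensions and kernels of epimorphisms by (i) and (ii)): the projective-injective objects are precisely the projective $\A$-modules, which coincide with the injective objects of this exact structure because $\Ext^1_\A(V,P)=0=\Ext^1_\A(P,V)$ for $V\in\A$-Gproj and $P$ projective (using (ii) and the corresponding vanishing for $\Hom_\A(V,\A)$), and there are enough of them since every $V$ admits $0\to\Omega V\to P(V)\to V\to 0$ with $\Omega V\in\A$-Gproj by (i), and dually a short exact sequence $0\to V\to Q\to \mho V\to 0$ with $Q$ projective and $\mho V\in\A$-Gproj coming from the complete resolution. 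This is exactly the content of \cite[Chap. I, \S 2.1]{happel}, so I would cite that and \cite{holmH}. Finally (vi) is immediate once (v) is established: in any Frobenius category the syzygy functor $\Omega$ is an autoequivalence of the associated stable category, with quasi-inverse the cosyzygy $\mho$; this is the standard construction underlying Happel's theorem that the stable category of a Frobenius category is triangulated. The main obstacle, if any, is purely bookkeeping in (v): verifying that $\A$-Gproj really is an exact category in which the projectives are injective and there are enough of both — but all the needed $\mathrm{Ext}$-vanishing is supplied by (ii) and the totally reflexive definition, so no genuinely new input is required beyond careful citation of \cite{auslander3}, \cite{holmH}, and \cite{happel}.
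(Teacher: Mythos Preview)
Your proposal is correct and, at the level of mathematical content, matches the paper's proof. The only difference is one of presentation: the paper disposes of (i)--(iv) by direct citation (Propositions 2.18, 2.3, 2.27 of \cite{holmH} and Proposition 3.1(b) of \cite{auslander3}), gives the same short direct argument as you do for (v), and cites \cite[Chap.~I, \S 2.2]{happel} for (vi). Your direct arguments for (i)--(iv) --- shifting the complete resolution, dimension-shifting on $\mathrm{pd}\,P$, descending induction on $\mathrm{pd}\,V$ via splitting, and the Cohen--Macaulay identification over Gorenstein algebras --- are precisely the proofs one finds in those cited sources, so nothing new is happening; you have simply chosen to be self-contained where the paper defers to the literature.
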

\begin{proof} 
Statement (i) follows from \cite[Prop. 2.18]{holmH}, (ii) follows from \cite[Prop. 2.3]{holmH}, (iii) follows from \cite[Prop. 2.27]{holmH}, (iv) follows from \cite[Prop. 3.1(b)]{auslander3}. On the other hand, it is straightforward to prove that $\A$-Gproj has enough projective and injective objects and is closed under extensions, i.e., if $0\to X\to Y\to Z\to 0$ is a short exact sequence of left $\A$-modules with $X$ and $Z$ Gorenstein-projective, then $Y$ is also Gorenstein-projective. In particular, $\A$-Gproj is an exact category in the sense of \cite{quillen}. Since every Gorenstein-projective $\A$-module is in particular (maximal) Cohen-Macaulay (in the sense of \cite{buchweitz}), it follows that the injective and projective objects in $\A$-Gproj coincide, which proves (v). Part (vi) follows from \cite[Chap. I, \S2.2]{happel}.   
\end{proof}
We also need the following well-known result concerning syzygies of $\A$-modules (for a proof see e.g. \cite[Lemma 5.2]{skart}).
\begin{lemma}\label{lemma:5.3}
Let $V$ and $W$ be finitely generated left $\A$-modules, and let $i\geq 1$ be an integer such that $\Ext_\A^i(V,\A)=0$. Then there exists an isomorphism of $\k$-vector spaces 
\begin{equation*}
\Ext_\A^i(V,W)\cong \SHom_\A(\Omega^iV,W).
\end{equation*}
\end{lemma}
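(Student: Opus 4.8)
The plan is to combine dimension shifting for $\Ext$ with the standard presentation of $\Ext^1$ attached to a syzygy short exact sequence, and then to check that the $\k$-subspace of ``trivial'' maps occurring there is exactly the one defining the stable Hom. Fix a projective resolution $\cdots\to P_1\to P_0\to V\to 0$ of $V$ in $\A$-mod, write $\Omega^jV$ for the $j$-th syzygy, and denote by $0\to\Omega^{j+1}V\xrightarrow{\iota_j}P_j\to\Omega^jV\to 0$ the associated short exact sequences (with $\Omega^0V=V$). Applying $\Hom_\A(-,W)$ and using $\Ext^k_\A(P_j,W)=0$ for all $k\geq 1$, the long exact sequence of $\Ext$ gives $\k$-linear isomorphisms $\Ext^k_\A(\Omega^{j+1}V,W)\cong\Ext^{k+1}_\A(\Omega^jV,W)$ for all $k\geq 1$; iterating these yields
\[\Ext^i_\A(V,W)\cong\Ext^1_\A(\Omega^{i-1}V,W)\]
for every $i\geq 1$. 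Running the same computation with $W=\A$ and invoking the hypothesis $\Ext^i_\A(V,\A)=0$ shows that $\Ext^1_\A(\Omega^{i-1}V,\A)=0$, whence $\Ext^1_\A(\Omega^{i-1}V,\A^n)=0$ for all $n\geq 0$; this is the only point at which the hypothesis is used.

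Next I would read off a presentation of $\Ext^1_\A(\Omega^{i-1}V,W)$ from the short exact sequence $0\to\Omega^iV\xrightarrow{\iota}P_{i-1}\to\Omega^{i-1}V\to 0$ (with $\iota=\iota_{i-1}$). Applying $\Hom_\A(-,W)$ and using $\Ext^1_\A(P_{i-1},W)=0$ produces an exact sequence $\Hom_\A(P_{i-1},W)\xrightarrow{\iota^\ast}\Hom_\A(\Omega^iV,W)\to\Ext^1_\A(\Omega^{i-1}V,W)\to 0$, so that $\Ext^1_\A(\Omega^{i-1}V,W)$ is $\k$-linearly isomorphic to $\Hom_\A(\Omega^iV,W)$ modulo the image of $\iota^\ast$, i.e. modulo the $\k$-subspace $\mathcal{P}'$ of all $\A$-homomorphisms $\Omega^iV\to W$ that factor through the inclusion $\iota$.

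The core of the argument is to identify $\mathcal{P}'$ with the $\k$-subspace $\mathcal{P}$ of all $\A$-homomorphisms $\Omega^iV\to W$ that factor through some projective $\A$-module, since by definition $\SHom_\A(\Omega^iV,W)=\Hom_\A(\Omega^iV,W)/\mathcal{P}$. The inclusion $\mathcal{P}'\subseteq\mathcal{P}$ is immediate because $P_{i-1}$ is projective. For the reverse inclusion, let $f\colon\Omega^iV\to W$ factor as $\Omega^iV\xrightarrow{\alpha}Q\xrightarrow{\beta}W$ with $Q$ projective; since $\Omega^iV$ is finitely generated we may assume $Q=\A^n$ is finitely generated free. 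Applying $\Hom_\A(-,\A^n)$ to $0\to\Omega^iV\xrightarrow{\iota}P_{i-1}\to\Omega^{i-1}V\to 0$ and using $\Ext^1_\A(\Omega^{i-1}V,\A^n)=0$ shows that $\iota^\ast\colon\Hom_\A(P_{i-1},\A^n)\to\Hom_\A(\Omega^iV,\A^n)$ is surjective; hence $\alpha$ extends to some $\tilde\alpha\colon P_{i-1}\to\A^n$ with $\tilde\alpha\iota=\alpha$, and then $\beta\tilde\alpha\colon P_{i-1}\to W$ satisfies $(\beta\tilde\alpha)\iota=f$, so $f\in\mathcal{P}'$. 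Combining this identification with the isomorphisms above gives the desired $\k$-linear isomorphism $\Ext^i_\A(V,W)\cong\Hom_\A(\Omega^iV,W)/\mathcal{P}=\SHom_\A(\Omega^iV,W)$.

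I expect the identification $\mathcal{P}'=\mathcal{P}$ to be the main obstacle: the inclusion $\mathcal{P}'\subseteq\mathcal{P}$ is formal, but the reverse one genuinely relies on the vanishing $\Ext^i_\A(V,\A)=0$ (equivalently $\Ext^1_\A(\Omega^{i-1}V,\A)=0$) in order to promote a factorization through an arbitrary projective into one through the canonical syzygy inclusion $\iota$. This hypothesis cannot be dropped: for instance, if $\A$ is the path algebra of the quiver $1\to 2$ and $V=S_1$ is the simple module at the source, then $\Omega V\cong P_2$ is projective, so $\SHom_\A(\Omega V,W)=0$ for every $W$, whereas $\Ext^1_\A(S_1,\A)\neq 0$. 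Finally, one should note that if a non-minimal projective resolution is used then $\Omega^iV$ is only well defined up to projective direct summands, but this is harmless since $\SHom_\A(-,W)$ annihilates projective summands; using minimal projective covers throughout makes $\Omega^iV$ well defined up to isomorphism.
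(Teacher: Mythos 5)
Your proof is correct and is the standard argument: dimension shifting to reduce to $\Ext^1_\A(\Omega^{i-1}V,W)$, presenting that $\Ext^1$ as $\Hom_\A(\Omega^iV,W)$ modulo maps extending over $\iota$, and using the vanishing $\Ext^1_\A(\Omega^{i-1}V,\A)=0$ to show those maps are exactly the ones factoring through a projective. The paper does not prove the lemma itself but cites \cite[Lemma 5.2]{skart}, whose proof proceeds in essentially the same way, so there is nothing to flag.
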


\section{(Uni)versal Deformation Rings of Finitely Generated Gorenstein-Projective Modules}\label{section3}
The aim of this section is to prove parts (i) and (ii) of Theorem \ref{thm01}.

We denote by $\Ca$ the full subcategory of $\hat{\Ca}$ consisting of Artinian rings.  Following  \cite[Def. 1.2]{sch}, a {\it small extension} in $\Ca$ is a surjective morphism $\pi:R\to R_0$ in $\Ca$ such that the kernel of $\pi$ is a principal ideal $tR$ annihilated by the maximal ideal $\mathfrak{m}_R$ of $R$. For all surjections  $\pi:R\to R_0$  in $\Ca$, and for all finitely generated projective $R_0\A$-module $Q_0$, we denote by $\mathrm{Proj}_R(Q_0)$ a projective $R\A$-module cover of $Q_0$. It follows that $R_0\otimes_{R,\pi}\mathrm{Proj}_R(Q_0)\cong Q_0$ as $R_0\A$-modules.   

\begin{lemma}\label{claim1}
Let $\pi:R\to R_0$ be a surjection in $\Ca$. 
\begin{enumerate}
\item Let $M$, $Q$ (resp. $M_0$, $Q_0$) be finitely generated $R\A$-modules (resp. $R_0\A$-modules), which are both free over $R$ (resp. $R_0$) and $Q$ (resp. $Q_0$) is projective over $R\A$ (resp. $R_0\A$). Suppose  that $\k\otimes_RM$ is an object in $\A\textup{-Gproj}$, and that there are $R_0\A$-module isomorphisms $g:R_0\otimes_{R,\pi}M\to M_0$ and $h:R_0\otimes_{R,\pi}Q\to Q_0$. If $v_0\in \Hom_{R_0\A}(M_0,Q_0)$, then there exists $v\in \Hom_{R\A}(M,Q)$ with $v_0=h\circ (R_0\otimes_{R,\pi}v)\circ g^{-1}$.
\item Let $M$ (resp. $M_0$) be as in (i). Suppose that  $\sigma_0\in \End_\A(M_0)$ factors through a projective $R_0\A$-module. Then there exists $\sigma\in \End_{R\A}(M)$ such that $\sigma$ factors through a projective $R\A$-module and $\sigma_0=g\circ (R_0\otimes_{R,\pi}\sigma)\circ g^{-1}$.
\item Let $R$ be an Artinian ring in $\Ca$. Suppose $P$ is a finitely generated projective $\A$-module and there exists a commutative diagram of finitely generated $R\A$-modules 
\begin{equation}\label{diag1}
\begindc{\commdiag}[330]
\obj(0,1)[p0]{$0$}
\obj(2,1)[p1]{$\mathrm{Proj}_R(P)$}
\obj(4,1)[p2]{$T$}
\obj(6,1)[p3]{$C$}
\obj(8,1)[p4]{$0$}
\obj(0,-1)[q0]{$0$}
\obj(2,-1)[q1]{$P$}
\obj(4,-1)[q2]{$\k\otimes_RT$}
\obj(6,-1)[q3]{$\k\otimes_RC$}
\obj(8,-1)[q4]{$0$}
\mor{p0}{p1}{}
\mor{p1}{p2}{$\alpha$}
\mor{p2}{p3}{$\beta$}
\mor{p3}{p4}{}
\mor{q0}{q1}{}
\mor{q1}{q2}{$\bar{\alpha}$}
\mor{q2}{q3}{$\bar{\beta}$}
\mor{q3}{q4}{}
\mor{p1}{q1}{}
\mor{p2}{q2}{}
\mor{p3}{q3}{}
\enddc
\end{equation}
in which $T$ and $C$ are free over $R$ and the bottom row arises by tensoring the top row with $\k$ over $R$ and identifying $P$ with $\k\otimes_R\mathrm{Proj}_R(P)$. Assume also that $\k\otimes_RT, \k \otimes_RC$ are objects in $\A\textup{-Gproj}$. Then the top row of (\ref{diag1}) splits as a sequence of $R\A$-modules.
\end{enumerate}
\end{lemma}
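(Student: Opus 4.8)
The plan is to prove the three parts in order, using the deformation-theoretic lifting machinery together with the Gorenstein-projective hypotheses, which enter crucially through Lemma \ref{lemma:3.1}(ii) and the vanishing of $\Ext^1$ against modules of finite projective dimension.

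For part (i), I would first reduce to the case where $\pi:R\to R_0$ is a small extension, since every surjection in $\Ca$ factors as a finite composition of small extensions and the statement is clearly compatible with such a factorization. So assume $\ker\pi = tR$ with $\m_R t = 0$. Via the isomorphisms $g$ and $h$ it suffices to find a lift of $v_0\in\Hom_{R_0\A}(R_0\otimes_{R,\pi}M, R_0\otimes_{R,\pi}Q)$ to some $v\in\Hom_{R\A}(M,Q)$ along the reduction map. Since $Q$ is projective over $R\A$, the functor $\Hom_{R\A}(M,-)$ is exact on short exact sequences of $R\A$-modules that are $R$-split; applying it to $0\to tQ \to Q \to R_0\otimes_{R,\pi}Q \to 0$ (which is $R$-split because $Q$ is free over $R$), the obstruction to lifting $v_0$ lives in $\Ext^1_{R\A}(M, tQ)$. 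Because $\m_R$ annihilates $tR$, the module $tQ\cong \k\otimes_\k(tR\otimes_R Q)$ is in fact an $\A$-module (extended from $\k$), namely a direct sum of copies of the $\A$-module $\k\otimes_R Q$, which is projective over $\A$ as the reduction of a projective $R\A$-module. A standard change-of-rings / adjunction argument then identifies the relevant $\Ext^1$ group with an $\Ext$-group over $\A$ involving $\k\otimes_R M$, which is Gorenstein-projective by hypothesis, and the projective $\A$-module $\k\otimes_R Q$; this vanishes by Lemma \ref{lemma:3.1}(ii). Hence the lift $v$ exists.

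For part (ii), write $\sigma_0 = \delta_0\circ\gamma_0$ with $\gamma_0: M_0\to Q_0$ and $\delta_0: Q_0\to M_0$ factoring through a finitely generated projective $R_0\A$-module $Q_0$. Choose a projective $R\A$-module cover $Q = \mathrm{Proj}_R(Q_0)$, so that $R_0\otimes_{R,\pi}Q\cong Q_0$; since $Q$ is free over $R$ (being projective over $R\A$ with $R$ Artinian local), part (i) applies and lifts $\gamma_0$ to $\gamma:M\to Q$. Lifting $\delta_0: Q_0\to M_0$ requires a dual statement: here I would use that $Q$ is projective over $R\A$, so $\Hom_{R\A}(Q,-)$ is exact and $\delta_0$ lifts to $\delta: Q\to M$ directly along the surjection $M\twoheadrightarrow M_0$ (which is $R$-split). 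Then $\sigma := \delta\circ\gamma\in\End_{R\A}(M)$ factors through the projective $R\A$-module $Q$ and reduces to $\sigma_0$ after conjugating by $g$, as required. The main point to check carefully is that $R_0\otimes_{R,\pi}\sigma = (R_0\otimes\delta)\circ(R_0\otimes\gamma)$ genuinely recovers $\sigma_0$ under $g$, which is a diagram chase using that reduction mod $\ker\pi$ is a functor commuting with composition.

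For part (iii), the goal is to split the top row of (\ref{diag1}) over $R\A$; the point is that $\mathrm{Proj}_R(P)$ is projective over $R\A$ but $C$ need not be, so we cannot split directly and must instead produce a retraction $T\to\mathrm{Proj}_R(P)$. Equivalently, the extension class of the top row in $\Ext^1_{R\A}(C,\mathrm{Proj}_R(P))$ must vanish. I would again induct on the length of $R$ via small extensions $\pi:R\to R_0$ with $\ker\pi = tR$, $\m_R t=0$, using the five-term sequence (or a direct filtration argument) relating $\Ext^1_{R\A}(C,\mathrm{Proj}_R(P))$ to the corresponding group over $R_0\A$ and to $\Ext$-groups over $\A$ with coefficients in the $\A$-module $tP \cong$ (a sum of copies of) $\k\otimes_R\mathrm{Proj}_R(P) = P$, which is projective over $\A$. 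Since $\k\otimes_R C$ is Gorenstein-projective, Lemma \ref{lemma:3.1}(ii) kills the $\A$-level contributions, and the inductive hypothesis handles the $R_0$-level term (the base case $R=\k$ being the bottom row, which splits because $\Ext^1_\A(\k\otimes_R C, P) = 0$ for the projective $\A$-module $P$, again by Lemma \ref{lemma:3.1}(ii)). I expect the main technical obstacle to be setting up these change-of-rings identifications cleanly — in particular verifying that the various $\Ext^1$ groups over $R\A$ with coefficients annihilated by $\m_R$ really do reduce to $\Ext^1_\A$ against the projective $\A$-module $P$ (or $\k\otimes_R Q$), so that Gorenstein-projectivity of the reduction can be invoked. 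Once those identifications are in place, all three parts follow from the vanishing in Lemma \ref{lemma:3.1}(ii).
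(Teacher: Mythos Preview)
Your proposal is correct and matches the paper's approach for parts (i) and (ii): reduce to small extensions, use freeness over $R$ to identify $\Ext^1_{R\A}(M,tQ)\cong\Ext^1_\A(\k\otimes_R M,\k\otimes_R Q)$ (via a projective $R\A$-resolution of $M$ whose reduction is a projective $\A$-resolution of $\k\otimes_R M$), and kill this via Lemma~\ref{lemma:3.1}(ii).

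For part (iii) the paper takes a slightly more direct route than your inductive $\Ext$-computation. It first notes that $\Ext^1_\A(\k\otimes_R C,P)=0$ (again by Lemma~\ref{lemma:3.1}(ii)), so the bottom row splits via some retraction $\bar w:\k\otimes_R T\to P$ with $\bar w\circ\bar\alpha=\mathrm{id}_P$. Then, rather than analyzing $\Ext^1_{R\A}(C,\mathrm{Proj}_R(P))$ through a filtration, it simply applies part (i) to the surjection $R\to\k$ (with $T$ playing the role of $M$ and $\mathrm{Proj}_R(P)$ the role of $Q$, using that $\k\otimes_R T$ is Gorenstein-projective) to lift $\bar w$ to $w:T\to\mathrm{Proj}_R(P)$; since $w\circ\alpha$ reduces to $\mathrm{id}_P$, it is an isomorphism by Nakayama, and $(w\circ\alpha)^{-1}\circ w$ splits the top row. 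Your approach works too, but the paper's reuse of part (i) is cleaner and avoids setting up the change-of-rings identifications a second time.
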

\begin{proof}
The proof of Lemma \ref{claim1} is very similar to the proof of Claims 1, 2 and 6 in the proof of \cite[Thm. 2.6]{blehervelez}. As noted in \cite[Remark 3.2.1]{blehervelez2}, one needs the assumption that $M$, $Q$ (resp. $M_0$, $Q_0$) are free over $R$ (resp. $R_0$) in the proof of these claims. The reason for this additional assumption is two-fold. First, this implies that tensoring a projective $R\A$-module resolution of $M$
\begin{equation*}
\cdots \to P_2\xrightarrow{\delta_2}P_1\xrightarrow{\delta_1}P_0\xrightarrow{\delta_0}M\to 0.
\end{equation*} 
with $\k$ over $R$ provides a projective $\A$-module resolution of $\k\otimes_R M$. Second, suppose $\pi:R\to R_0$ is a small extension in $\Ca$ with $\ker \pi=tR$. Then $\Hom_{R\A}(P_i,tQ)$ is isomorphic to $\Hom_\A(\k\otimes_RP_i,tQ)$ for all $i\geq 0$, and these isomorphisms are natural with respect to the $R\A$-module homomorphisms $\delta_i:P_i\to P_{i-1}$ for all $i\geq 1$. 

These observations imply that $\Ext^1_{R\A}(M,tQ)\cong \Ext_\A^1(\k\otimes_RM,tQ)$. By using that $tQ\cong \k\otimes_RQ$ is a projective $\A$-module together with Lemma \ref{lemma:3.1} (ii), we obtain $\Ext_\A^1(\k\otimes_RM,tQ)=0$. The remainder of the proof of Lemma \ref{claim1} (i)-(ii) is the same as the proof of Claims 1 and 2 in the proof of \cite[Thm. 2.6]{blehervelez}.

In order to prove Lemma \ref{claim1} (iii), we use Lemma \ref{lemma:3.1} (ii) again to obtain that $\Ext_\A^1(\k\otimes_RC,P)=0$. This implies that $\bar{\alpha}^\ast:\Hom_\A(\k\otimes_RT,P)\to \Hom_\A(P,P)$ is surjective. Hence there exists a $\A$-module homomorphism $\bar{w}:\k\otimes_RT\to P$ with $\bar{w}\circ \bar{\alpha}=\mathrm{id}_P$. The remainder of the proof of Lemma \ref{claim1} (iii) is the same as the proof of Claim 6 in the proof of \cite[Thm. 2.6]{blehervelez}. 
\end{proof}

The next result is proved in the same way as \cite[Lemma 3.2.2]{blehervelez2} by using Lemma \ref{claim1} instead of \cite[Remark 3.2.1]{blehervelez2}. Note that by \cite[Remark 2.1]{bleher15}, if $P$ is a finitely generated non-zero $\A$-module such that $\Ext_\A^1(P,P)=0$ then the versal deformation ring $R(\A,P)$ is universal and isomorphic to $\k$.

\begin{lemma}\label{lemma3}
Let $V$ be a finitely generated non-zero Gorenstein-projective left $\A$-module and assume that $P$  is a finitely generated left $\A$-module. Then the versal deformation ring $R(\A,P\oplus V)$ is isomorphic to the versal deformation ring $R(\A,V)$.
\end{lemma}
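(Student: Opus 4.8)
The plan is to reduce the statement about $R(\A, P \oplus V)$ to the statement about $R(\A, V)$ by showing that the deformation functors $\hat{\Fun}_{P \oplus V}$ and $\hat{\Fun}_V$ are naturally isomorphic (or at least that their versal hulls coincide), with the aid of Lemma \ref{claim1}. First I would reduce to the case where $P$ is projective: writing $P = P' \oplus P''$ with $P'$ projective of maximal rank and $P''$ having no projective summands, one observes that $P'' \oplus V$ is still Gorenstein-projective (Gorenstein-projective modules are closed under direct sums/summands), so it suffices to handle the two cases ``$P$ projective'' and ``$P = 0$'' separately; but in fact the content is entirely in the projective case, since any finitely generated $\A$-module that is a summand of a Gorenstein-projective module together with a projective must already be Gorenstein-projective. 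Actually the cleanest route, mirroring \cite[Lemma 3.2.2]{blehervelez2}, is: if $P$ has a non-projective indecomposable summand $W$, then $W \oplus V$ is Gorenstein-projective and one iterates; so WLOG $P$ is projective.

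Next, for $P$ projective and $R \in \Ob(\Ca)$, I would set up a bijection $\Def_\A(P \oplus V, R) \to \Def_\A(V, R)$. Given a lift $(N, \psi)$ of $P \oplus V$ over $R$, the projective summand lifts canonically: using a projective $R\A$-module cover $\mathrm{Proj}_R(P)$ of $P$ and Lemma \ref{claim1}(iii) (with the relevant short exact sequence of lifts, whose $\k$-reduction splits off $P$), one shows the lift $N$ splits as $N \cong \mathrm{Proj}_R(P) \oplus M$ for an $R\A$-module $M$ free over $R$ with $\k \otimes_R M \cong V$; this $M$ is well-defined up to isomorphism of lifts, giving a well-defined deformation $[M, \phi]$ of $V$. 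Conversely, $[M, \phi] \mapsto [\mathrm{Proj}_R(P) \oplus M, \mathrm{id} \oplus \phi]$. One checks these are mutually inverse and natural in $R$; the splitting in Lemma \ref{claim1}(iii) is exactly what is needed because $\mathrm{Proj}_R(P) \otimes_R \k \cong P$, $\Ext^1_\A(V, P) = 0$ by Lemma \ref{lemma:3.1}(ii), and $\k \otimes_R N$, being an extension of $V$ by $P$, lies in $\A\textup{-Gproj}$ (closure under extensions, Lemma \ref{lemma:3.1}(v)). Lemma \ref{claim1}(i)--(ii) is used to verify that isomorphisms of lifts and the splitting maps all descend/lift correctly along surjections in $\Ca$, so that the bijection respects the functorial structure.

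Having a natural isomorphism $\hat{\Fun}_{P \oplus V} \cong \hat{\Fun}_V$ of functors on $\Ca$ (and then on $\hat{\Ca}$ by the usual inverse-limit argument, as in \cite{blehervelez}), the two functors have the same versal deformation ring, proving $R(\A, P \oplus V) \cong R(\A, V)$ in $\hat{\Ca}$. One edge case to record: if $V$ itself is projective (equivalently, by Lemma \ref{lemma:3.1}(iii), of finite projective dimension), then $\Ext^1_\A(V, V) = 0$ and the cited \cite[Remark 2.1]{bleher15} already gives $R(\A, V) \cong R(\A, P \oplus V) \cong \k$; so we may assume $V$ is non-projective where convenient.

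The main obstacle is establishing the canonical splitting $N \cong \mathrm{Proj}_R(P) \oplus M$ and, more delicately, checking its \emph{naturality}: one must verify that the chosen splittings can be made compatible along a small extension $\pi : R \to R_0$, so that the induced maps on deformation sets commute with $\hat{\Fun}(\pi)$. This is precisely where Lemma \ref{claim1}(ii) enters — it lets one correct a splitting over $R_0$ to one over $R$ that reduces to it — and where one must be careful that ``free over $R$'' is preserved, which is why Lemma \ref{claim1} was formulated with that hypothesis. Once the small-extension compatibility is in hand, the general case in $\Ca$ follows by the standard dévissage along composition series of surjections, and the passage to $\hat{\Ca}$ is formal.
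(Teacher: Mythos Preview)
Your overall strategy for the projective case --- use Lemma~\ref{claim1}(iii) to split any lift of $P\oplus V$ as $\mathrm{Proj}_R(P)\oplus M$, use Lemma~\ref{claim1}(i)--(ii) to check compatibility along small extensions, and conclude that $\hat{\Fun}_{P\oplus V}\cong\hat{\Fun}_V$ --- is exactly what the paper intends by citing \cite[Lemma~3.2.2]{blehervelez2} with Lemma~\ref{claim1} substituted in. So for projective $P$ your proposal is correct and matches the paper's proof.

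The problem is your opening reduction. You claim that if $P$ has a non-projective indecomposable summand $W$ then $W\oplus V$ is Gorenstein-projective, and similarly that $P''\oplus V$ is Gorenstein-projective where $P''$ is the non-projective part of $P$. Neither follows: Gorenstein-projectives are closed under summands and extensions, but there is no reason an \emph{arbitrary} non-projective $W$ should be Gorenstein-projective, and $W\oplus V$ is Gorenstein-projective iff $W$ is. So your induction does not get off the ground, and the statement as literally written (for arbitrary finitely generated $P$) is not proved by your argument --- indeed it is not clear it is even true without further hypotheses on $P$.

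In fact the lemma as stated almost certainly contains a typo: compare Theorem~\ref{thm01}(i), which is what this lemma is meant to establish and which explicitly assumes $P$ is \emph{projective}, and note that Lemma~\ref{claim1}(iii) is formulated only for projective $P$. Once you read the hypothesis as ``$P$ a finitely generated projective left $\A$-module'', your reduction step is unnecessary and the rest of your argument goes through as written.
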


The next result is proved the same way as parts (i) and (ii) of \cite[Thm. 2.6]{blehervelez} by replacing Claims 1 and 2 in the proof of \cite[Thm. 2.6]{blehervelez} by parts (i) and (ii) of Lemma \ref{claim1}.
\begin{theorem}\label{thm1}
Let $V$ be a finitely generated Gorenstein-projective left $\A$-module whose stable endomorphism ring $\SEnd_\A(V)$ is isomorphic to $\k$.
\begin{enumerate}
\item The deformation functor $\hat{\Fun}_V$ is naturally isomorphic to the weak deformation functor $\hat{\Fun}_V^w$ as in Remark \ref{rem1}.
\item The module $V$ has a universal deformation ring $R(\A,V)$. 
\end{enumerate}
\end{theorem}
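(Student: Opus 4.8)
The plan is to follow the strategy of \cite[Thm.\ 2.6]{blehervelez} essentially verbatim, using the Gorenstein-projective hypothesis on $V$ (via Lemma \ref{claim1}) exactly where the original argument used self-injectivity of $\A$. First I would establish part (i): for a small extension $\pi:R\to R_0$ in $\Ca$ with $\ker\pi = tR$, I must show that two lifts $(M,\phi)$ and $(M',\phi')$ of $V$ over $R$ which become isomorphic as $R_0\A$-modules after applying $R_0\otimes_{R,\pi}-$ are in fact isomorphic as lifts over $R$ (this is the precise content of $\hat\Fun_V \cong \hat\Fun_V^w$ in the relevant sense). The key input is that, given an $R_0\A$-module isomorphism $g_0: R_0\otimes_{R,\pi}M \to R_0\otimes_{R,\pi}M'$, one can lift it to an $R\A$-module homomorphism $g:M\to M'$; since $\k\otimes_R M \cong V$ lies in $\A$-Gproj and the kernel $tM' \cong \k\otimes_R M'$ is a projective $\A$-module, Lemma \ref{claim1}(i) provides exactly such a lift, and a standard Nakayama-type argument shows $g$ is an isomorphism. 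One then corrects $g$ by an automorphism of $M'$ so that it respects $\phi,\phi'$, using that any $\A$-automorphism of $V$ lifts (again via the surjectivity coming from Lemma \ref{claim1}(i) applied to $\End$), and that the obstruction to matching $\phi$ lives in a group that vanishes because $\SEnd_\A(V)\cong\k$ forces $\End_\A(V)$ to be local, so every relevant endomorphism is either an automorphism or factors through a projective.

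Next I would prove part (ii). By \cite[Prop.\ 2.1]{blehervelez}, $V$ already has a versal deformation ring $R(\A,V)$; what remains is to upgrade versality to universality, i.e.\ to show the morphism $\psi_{R(\A,V),R,[M,\phi]}$ is unique for \emph{all} $R\in\Ob(\hat\Ca)$, not merely for $R=\k[\epsilon]$. The standard reduction is: it suffices to prove uniqueness for every small extension $\pi:R\to R_0$, i.e.\ that if $\alpha,\alpha': R(\A,V)\to R$ both push the versal deformation to $[M,\phi]$ and agree after composing with $\pi$, then $\alpha=\alpha'$. This is equivalent to showing that the natural map $\Def_\A(V,R)\to \Def_\A(V,R_0)$ has fibers that are either empty or torsors under the vector space $\Ext^1_\A(V,V)\otimes_\k (tR)$, \emph{and} — crucially for universality rather than versality — that the tangent-obstruction map is injective, which follows from Schlessinger's criteria once one knows the lifting of homomorphisms is unobstructed. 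Concretely, the difference $\alpha-\alpha'$ (as a $\k$-linear map into $tR\cong\k$) produces, via Lemma \ref{lemma:5.3}, an element of $\SHom_\A(\Omega V, V)$; but part (i) together with the hypothesis $\SEnd_\A(V)\cong\k$ shows this element must be realized by an actual isomorphism of lifts, forcing it to be zero. This is the step where one genuinely needs $V$ Gorenstein-projective: Lemma \ref{lemma:3.1}(ii) guarantees $\Ext^1_\A(\k\otimes_R M, tQ)=0$ for the projective $\A$-module $tQ$, which is what powers all the lifting statements in Lemma \ref{claim1}.

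For the final sentence of the statement — that $R(\A,\Omega V)$ is also universal — I would argue as follows. By Lemma \ref{lemma:3.1}(i), $\Omega V$ is again finitely generated Gorenstein-projective, so by part (ii) it suffices to check $\SEnd_\A(\Omega V)\cong\k$. Since $\Omega$ induces an autoequivalence of $\A$-\underline{Gproj} by Lemma \ref{lemma:3.1}(vi), and $V,\Omega V$ both lie in this category, we get $\SEnd_\A(\Omega V) = \End_{\A\text{-}\underline{\mathrm{Gproj}}}(\Omega V)\cong \End_{\A\text{-}\underline{\mathrm{Gproj}}}(V) = \SEnd_\A(V)\cong\k$, and we are done. (One should note $\Omega V$ could in principle be zero or projective, i.e.\ zero in the stable category, but then $\SEnd_\A(\Omega V)=0\ne\k$, so this case does not arise under our hypothesis; alternatively if $\Omega V$ has a projective summand one first strips it off using Lemma \ref{lemma3}.)

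The main obstacle I anticipate is not any single deep idea but rather the bookkeeping in part (i)/(ii): one must carefully track how the isomorphism $\phi$ of a lift interacts with the liftings produced by Lemma \ref{claim1}, and verify that the various short exact sequences $0\to tM\to M\to M_0\to 0$ behave well — in particular that $\k\otimes_R M$ remains in $\A$-Gproj at each stage of an induction along a composition series of small extensions (which is why the hypotheses of Lemma \ref{claim1} are phrased to require $\k\otimes_R M$, $\k\otimes_R T$, $\k\otimes_R C$ to be Gorenstein-projective), and that the obstruction classes lie in the expected $\Ext$ and $\SHom$ groups. Since the excerpt tells us the proof is ``the same as'' that of \cite[Thm.\ 2.6]{blehervelez} with Claims 1, 2 replaced by Lemma \ref{claim1}(i), (ii), the honest work is to confirm that every invocation of self-injectivity in that earlier proof was used \emph{only} to guarantee the vanishing $\Ext^1_\A(-, \text{projective})=0$ on the relevant modules, which is now supplied instead by Lemma \ref{lemma:3.1}(ii) applied to the Gorenstein-projective module $V$ and its syzygies.
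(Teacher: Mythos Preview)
Your overall strategy is exactly the paper's: the proof is literally ``the same as parts (i) and (ii) of \cite[Thm.~2.6]{blehervelez} with Claims 1 and 2 there replaced by Lemma \ref{claim1}(i),(ii),'' and you correctly identify that the only place self-injectivity was used in the original is to obtain $\Ext^1_\A(-,\text{projective})=0$, which Lemma \ref{lemma:3.1}(ii) now supplies for Gorenstein-projective $V$.

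Two small points to clean up. First, your mechanism for (i) is slightly off: the content of $\hat\Fun_V\cong\hat\Fun_V^w$ is that if $M\cong M'$ as $R\A$-modules then they are isomorphic \emph{as lifts}; you already have an $R\A$-isomorphism and must adjust it to respect $\phi,\phi'$, which uses Lemma \ref{claim1}(ii) (lifting the projective-factoring part of the induced automorphism of $V$), not Lemma \ref{claim1}(i). In particular your sentence ``the kernel $tM'\cong\k\otimes_R M'$ is a projective $\A$-module'' is false ($\k\otimes_R M'\cong V$ is only Gorenstein-projective), and Lemma \ref{claim1}(i) cannot be invoked with $Q=M'$ since $M'$ need not be $R\A$-projective. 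Second, the paragraph on $\Omega V$ is not part of Theorem \ref{thm1}; that assertion belongs to Theorem \ref{thm01}(ii) and is handled separately (your argument for it via Lemma \ref{lemma:3.1}(vi) is fine).
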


\section{(Uni)versal Deformation Rings of Finitely Generated Gorenstein-Projective Modules and Singular Equivalences of Morita Type}\label{section4}
The aim of this section is to prove Theorem \ref{thm01} (iii).
Let $\A$ and $\Gamma$ be two finite dimensional $\k$-algebras.
\begin{remark}\label{rem:5.3}
The concept of singular equivalence of Morita type, as given in Definition \ref{defi:3.2},  was further generalized by Z. Wang in \cite{wang}, where the concept of {\it singular 
equivalence of Morita type with level} is introduced. It was proved by \O. Skarts{\ae}terhagen in \cite[Prop. 2.6]{skart} that if ${_\Gamma}X_\A$ and ${_\A}Y_\Gamma$ are bimodules that induce a singular equivalence of Morita type, then they induce a singular equivalence of Morita type with level. Therefore, it follows from \cite[Lemma. 3.6]{skart} that if ${_\Gamma}X_\A$ and ${_\A}Y_\Gamma$ are bimodules which induce a singular equivalence of Morita type between two finite-dimensional Gorenstein $\k$-algebras $\A$ and $\Gamma$ as in Definition \ref{defi:3.2}, then the functors 
\begin{align*}%\label{cmequiv0}
X\otimes_\A-:\A\textup{-mod} \to \Gamma \textup{-mod}&& \text{ and } && Y\otimes_\Gamma-:\Gamma\textup{-mod} \to \A\textup{-mod} 
\end{align*}
send finitely generated Gorenstein-projective left modules to finitely generated Gorenstein-projective left modules. By \cite[Prop. 2.3]{zhouzimm} and \cite[Prop. 3.7]{skart} it follows that 
\begin{align*}%\label{cmequiv}
X\otimes_\A-:\A\textup{-\underline{Gproj}}\to \Gamma\textup{-\underline{Gproj}}&& \text{ and } && Y\otimes_\Gamma-:
\Gamma\textup{-\underline{Gproj}}\to \A\textup{-\underline{Gproj}} 
\end{align*}
are equivalences of triangulated categories that are quasi-inverses of each other.
\end{remark}

\begin{remark}\label{rem:4.2}
Assume that $\A$ and $\Gamma$ are both Gorenstein $\k$-algebras, and that ${_\Gamma}X_\A$ and ${_\A}Y_\Gamma$ are bimodules that induce a singular equivalence of Morita type between $\A$ and $\Gamma$ as in Definition \ref{defi:3.2}. Moreover, let $P$ be a $\A$-$\A$-bimodule with finite projective dimension such that $Y\otimes_\Gamma X\cong \A\oplus P$ as $\A$-$\A$-bimodules, as in Definition \ref{defi:3.2} (iv). 
\begin{enumerate}
\item If $V$ is a finitely generated Gorenstein-projective left $\A$-module, then we have by Remark \ref{rem:5.3} that $Y\otimes_\Gamma(X\otimes_\A V)\cong V\oplus (P\otimes_\A V)$ and $V$ are isomorphic in the stable category $\A$-\underline{Gproj}. This implies that $P\otimes_\A V$ is a finitely generated projective left $\A$-module. 
\item Let $R\in \Ob(\Ca)$ be Artinian. Then $X_R=R\otimes_\k X$ is projective as a left $R\Gamma$-module and as a right $R\A$-module, and $Y_R=R\otimes_\k Y$ is projective 
as a left $R\A$-module and as a right $R\Gamma$-module. Note that $X_R\otimes_{R\A}Y_R\cong R\otimes_\k(X\otimes_\A Y)$ as $R\Gamma$-$R\Gamma$-bimodules and $Y_R\otimes_{R\Gamma} X_R\cong R\otimes_\k (Y\otimes_\A X)$ as $R\A$-$R\A$-bimodules. Therefore, it follows from Definition \ref{defi:3.2} that 
\begin{align*}
X_R\otimes_{R\A} Y_R&\cong R\Gamma\oplus Q_R&&\text{ as $R\Gamma$-$R\Gamma$-bimodules, and}\\
Y_R\otimes_{R\Gamma} X_R&\cong R\A \oplus P_R&&\text{ as $R\A$-$R\A$-bimodules,}
\end{align*}
where $P_R=R\otimes_\k P$ (resp. $Q_R=R\otimes_\k Q$) is an $R\A$-$R\A$-bimodule (resp. $R\Gamma$-$R\Gamma$-bimodule) with 
finite projective dimension.
\end{enumerate}
\end{remark}
The next result shows that singular equivalences of Morita type preserve versal deformation rings. This is proved the same way as \cite[Prop. 3.2.6]{blehervelez2} by using Remark \ref{rem:4.2} and by replacing \cite[Lemma 3.2.2]{blehervelez2} by Lemma \ref{lemma3}. Note that the last statement is a direct consequence of Theorem \ref{thm1}. 

\begin{theorem}\label{prop2.7}
Assume that $\A$ and $\Gamma$ are Gorenstein $\k$-algebras. Suppose that ${_\Gamma}X_\A$ and ${_\A} Y_\Gamma$ are bimodules that induce a singular equivalence of Morita type between $\A$ and $\Gamma$ as in Definition \ref{defi:3.2}. Let $V$ be a finitely generated Gorenstein-projective left $\A$-module, and define $V'=X\otimes_\A V$. Then the versal deformation rings $R(\A,V)$ and $R(\Gamma, V')$ are 
isomorphic in $\hat{\Ca}$. Moreover, if the stable endomorphism ring of $V$ is isomorphic to $\k$, then both $R(\A,V)$ and $R(\Gamma, V')$ are universal deformation rings that are isomorphic.  
\end{theorem}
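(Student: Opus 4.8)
The plan is to prove Theorem \ref{prop2.7} by closely following the strategy of \cite[Prop. 3.2.6]{blehervelez2}, substituting the Gorenstein-projective analogues that have already been established in this paper. First I would fix the setup: let $V$ be a finitely generated Gorenstein-projective left $\A$-module and $V' = X\otimes_\A V$. By Remark \ref{rem:5.3}, $V'$ is a finitely generated Gorenstein-projective left $\Gamma$-module, and $Y\otimes_\Gamma V' = Y\otimes_\Gamma X\otimes_\A V \cong V\oplus(P\otimes_\A V)$ where $P\otimes_\A V$ is a finitely generated projective left $\A$-module by Remark \ref{rem:4.2}(i). The core idea is to construct mutually inverse natural transformations between the deformation functors $\hat{\Fun}_V$ and $\hat{\Fun}_{V'}$, and then invoke the uniqueness of the versal deformation ring to conclude $R(\A,V)\cong R(\Gamma,V')$ in $\hat{\Ca}$.

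Next I would define the transformation on lifts. Given $R\in\Ob(\hat{\Ca})$ and a lift $(M,\phi)$ of $V$ over $R$, form $X_R\otimes_{R\A}M$, where $X_R = R\otimes_\k X$ as in Remark \ref{rem:4.2}(ii). Since $X_R$ is projective (hence flat) as a right $R\A$-module and $M$ is free over $R$, the module $X_R\otimes_{R\A}M$ is free over $R$; and one checks $\k\otimes_R(X_R\otimes_{R\A}M)\cong X\otimes_\A(\k\otimes_R M)\cong X\otimes_\A V = V'$, which supplies the required $\A$-module isomorphism to make $(X_R\otimes_{R\A}M,\psi)$ a lift of $V'$ over $R$. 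This assignment respects isomorphism of lifts and base change along morphisms $\alpha:R\to R'$ in $\hat{\Ca}$ (because $X_{R'}\otimes_{R'\A}(R'\otimes_{R,\alpha}M)\cong R'\otimes_{R,\alpha}(X_R\otimes_{R\A}M)$), so it descends to a natural transformation $\Phi:\hat{\Fun}_V\to\hat{\Fun}_{V'}$. Symmetrically, $Y_R\otimes_{R\Gamma}-$ gives $\Psi:\hat{\Fun}_{V'}\to\hat{\Fun}_V$.

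Then I would show $\Psi\circ\Phi$ and $\Phi\circ\Psi$ are the identity natural transformations. Using Remark \ref{rem:4.2}(ii), $Y_R\otimes_{R\Gamma}X_R\otimes_{R\A}M\cong M\oplus(P_R\otimes_{R\A}M)$ as $R\A$-modules, where $P_R\otimes_{R\A}M$ is free over $R$ and reduces mod $\m_R$ to the projective $\A$-module $P\otimes_\A V$. So $\Psi(\Phi([M,\phi]))$ is the deformation of $V$ represented by $M\oplus(P_R\otimes_{R\A}M)$; I need to identify this with $[M,\phi]$ in $\Def_\A(V,R)$. This is exactly where Lemma \ref{lemma3} enters at the level of universal deformation rings, but at the level of deformation functors one argues directly: $P_R\otimes_{R\A}M$ is a lift over $R$ of the projective module $P\otimes_\A V$, and splitting off projective summands is handled by Lemma \ref{claim1}(iii) applied along a chain of small extensions $\k[\epsilon]/(\epsilon^2)$-style filtration of $R$ (every Artinian $R$ is reached from $\k$ by a finite chain of small extensions, and $\hat{\Ca}$ is handled by passing to the limit). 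The upshot is that adding $P_R\otimes_{R\A}M$ does not change the deformation, so $\Psi\circ\Phi\cong\mathrm{id}$; symmetrically $\Phi\circ\Psi\cong\mathrm{id}$, using $X_R\otimes_{R\A}Y_R\cong R\Gamma\oplus Q_R$ and that $Q\otimes_\Gamma V'$ is projective (this last fact following from Remark \ref{rem:5.3} applied to $X\otimes_\A Y\otimes_\Gamma V'\cong V'\oplus(Q\otimes_\Gamma V')$ being stably isomorphic to $V'$). Hence $\hat{\Fun}_V$ and $\hat{\Fun}_{V'}$ are naturally isomorphic, so their versal deformation rings coincide: $R(\A,V)\cong R(\Gamma,V')$ in $\hat{\Ca}$. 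Finally, if $\SEnd_\A(V)\cong\k$ then by Theorem \ref{thm1} $R(\A,V)$ is universal; transporting along the natural isomorphism of functors, $\hat{\Fun}_{V'}\cong\Hom_{\hat{\Ca}}(R(\A,V),-)$ is representable, so $R(\Gamma,V')$ is universal as well, and the two are isomorphic.

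The main obstacle I expect is the bookkeeping in the step $\Psi\circ\Phi\cong\mathrm{id}$: one must verify that the various canonical isomorphisms ($Y_R\otimes X_R\otimes M\cong M\oplus P_R\otimes M$, compatibility with $\k\otimes_R-$, compatibility with the fixed rigidifications $\phi$ and $\psi$) can be chosen coherently so that, after splitting off the projective part via Lemma \ref{claim1}(iii), the resulting isomorphism of lifts is compatible with $\phi$ on the nose — not merely up to an automorphism of $V$. This is the same technical heart as in \cite[Prop. 3.2.6]{blehervelez2}, and the point of the present paper is precisely that Lemma \ref{claim1} and Lemma \ref{lemma3} make the argument go through verbatim once $V$ is Gorenstein-projective and $\A,\Gamma$ are Gorenstein (so that the relevant $\Ext$-vanishing in Lemma \ref{lemma:3.1}(ii) is available).
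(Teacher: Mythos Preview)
Your proposal is correct and follows essentially the same approach as the paper: construct mutually inverse natural transformations between $\hat{\Fun}_V$ and $\hat{\Fun}_{V'}$ via $X_R\otimes_{R\A}-$ and $Y_R\otimes_{R\Gamma}-$ using Remark~\ref{rem:4.2}, then split off the projective summand $P_R\otimes_{R\A}M$ by invoking Lemma~\ref{claim1}(iii)/Lemma~\ref{lemma3} in place of \cite[Lemma 3.2.2]{blehervelez2}. For the final statement the paper simply notes that $\SEnd_\Gamma(V')\cong\SEnd_\A(V)\cong\k$ by Remark~\ref{rem:5.3} and applies Theorem~\ref{thm1} directly to $V'$, which is marginally cleaner than your representability-transport argument but equivalent.
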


\section{Examples}\label{section5}

In this section we provide two applications of Theorem \ref{thm01}. Namely, we first consider monomial $\k$-algebras with no overlap (see Theorem \ref{thmmon}). Then we illustrate part (iii) of Theorem \ref{thm01} by discussing an example of two Gorenstein $\k$-algebras (both non-self-injective of infinite global dimension), which are stably equivalent of Morita type, and thus singularly equivalent of Morita type as in Definition \ref{defi:3.2} (see Example \ref{exam1}).

Recall that a quiver $Q$ is a directed graph with a set of vertices $Q_0$, a set of arrows $Q_1$ and two functions $\mathbf{s},\mathbf{t}:Q_1\to Q_0$, where for all $\alpha\in Q_1$, $\mathbf{s}\alpha$ (resp. $\mathbf{t}\alpha$) denotes the vertex where $\alpha$ starts (resp. ends). A path in $Q$ of length $n\geq 1$ is an ordered sequence of arrows $p=\alpha_n\cdots\alpha_1$ with $\mathbf{t}\alpha_j=\mathbf{s}\alpha_{j+1}$ for $1\leq j <n$. Additionally, for each $i\in Q_0$, we have a trivial path $e_i$ of length zero with $\mathbf{s}e_i=i=\mathbf{t}e_i$. For a non-trivial path $p=\alpha_n\cdots \alpha_1$ we define $\mathbf{s}p=\mathbf{s}\alpha_1$ and $\mathbf{t}p=\mathbf{t}\alpha_n$.  A non-trivial path $p$ in $Q$ is said to be an oriented cycle provided that $\mathbf{s}p=\mathbf{t}p$. The path algebra $\k Q$ of a quiver $Q$ is the $\k$-vector space whose basis consists of all the paths in $Q$, and for two paths $p$ and $q$, their multiplication is given by the concatenation $pq$ provided that $\mathbf{s}p=\mathbf{t}q$, or zero otherwise. Let $J$ be the two-sided ideal of $\k Q$ generated by all the arrows in $Q$. We say that an ideal $I$ of $\k Q$ is admissible if there exists $d\geq 2$ such that $J^d\subseteq I \subseteq J^2$. In this situation, the quotient $\k Q/ I$ is a finite dimensional $\k$-algebra. If $p$ is a path in $Q$, we denote also by $p$ its equivalence class in $\k Q/I$. In particular, a path $p$ in $\k Q/I$ is a {\it zero-path} if and only if $p$ belongs to $I$. Recall that an admissible ideal $I$ of $\k Q$ is said to be monomial if it is generated by paths of length at least two. In this situation we say that the quotient algebra $\k Q/I$ is a {\it monomial algebra}. Recall that a monomial algebra $\k Q/I$ is said to be {\it quadratic monomial} provided that the ideal $I$ is generated by paths of length two. In particular, gentle algebras (as introduced in \cite{assem}) are quadratic monomial. 

Let $\A=\k Q/I$ be a monomial algebra. Following \cite{chen-shen-zhou}, we say that a pair $(p,q)$ of non-zero paths in $\A$ is a {\it perfect pair} provided that the following conditions are satisfied:
\begin{itemize}
\item[(P1)] both $p$ and $q$ are non-trivial with $\mathbf{s}p=\mathbf{t}q$ and $pq$ is a zero-path in $\A$;
\item[(P2)] if $pq'$ is a zero-path in $\A$ for a non-zero path $q'$ with $\mathbf{t}q'=\mathbf{s}p$, then $q'=qq''$ for some path $q''$ in $\A$;
\item[(P3)] if $p'q$ is a zero-path in $\A$ for a non-zero path $p'$ with $\mathbf{t}q=\mathbf{s}p'$, then $p'=p''p$ for some path $p''$ in $\A$.
\end{itemize}
A non-zero path $p$ in $\A$ is {\it perfect}, provided that there exists a sequence $p=p_1, p_2,\ldots, p_n, p_{n+1}=p$ of non-zero paths in $\A$ such that for all $1\leq i\leq n$, the pair $(p_i, p_{i+1})$ is a perfect pair. 
It follows from \cite[Thm. 4.1]{chen-shen-zhou} that a finitely generated indecomposable non-projective left $\A$-module $V$ is Gorenstein-projective if and only if $V= \A p$, where $p$ is a perfect path in $\A$. 
%In this situation, if $\Omega V=\Omega \A p$ denotes the first syzygy of $V=\A p$, then it follows from \cite[Lemma 3.1]{chen-shen-zhou} that there exists a non-zero path $q$ in $\A$ such that $\Omega V =\A q$, and thus we obtain a short exact sequence of left $\A$-modules  
%\begin{equation}\label{eqn1}
%0\to \A q \xrightarrow{\iota} \A e_{\mathbf{t} p}\xrightarrow{\pi} \A p \to 0. 
%\end{equation}
%Note that $\A q$ is also a non-projective Gorenstein-projective left $\A$-module. By \cite[Prop. 4.3]{chen-shen-zhou}, we can assume that $q$ is also a perfect path in $\A$. 
Following \cite[\S 5]{chen-shen-zhou}, an {\it overlap} in $\A$ is given by two perfect paths $p$ and $q$ in $\A$ that satisfy one of the following conditions:
\begin{itemize}
\item[(O1)] $p=q$, and $p=p'x$ and $q=xq'$ for some non-trivial paths $x$, $p'$ and $q'$ with the path $p'xq'$ non-zero.
\item[(O2)] $p\not=q$, and $p=p'x$ and $q=xq'$ for some non-trivial path $x$ with the path $p'xq'$ non-zero.  
\end{itemize}

\begin{remark}\label{remmon}
Assume that there is no overlap in $\A$ and let $V$ be a finitely generated  indecomposable Gorenstein-projective left $\A$-module. If  $V$ is non-projective, then by the arguments in the proof of \cite[Prop. 5.9]{chen-shen-zhou} together with Lemma \ref{lemma:5.3} we obtain that $\SEnd_\A(V)=\k$ and 
\begin{equation}\label{ext2}
\Ext_\A^1(V,V)= \SHom_\A(\Omega V, V)=
\begin{cases}
\k, &\text{ if $V=\Omega V$,}\\
0, &\text{ otherwise.}
\end{cases}
\end{equation}
\end{remark}

\begin{theorem}\label{thmmon}
Let $\A=\k Q/I$ be a monomial algebra in which there is no overlap, and let $V$ be a finitely generated indecomposable Gorenstein-projective left $\A$-module. Then the versal deformation ring $R(\A,V)$ of $V$ is universal and isomorphic either to $\k$ or to $\k[\![t]\!]/(t^2)$. 
\end{theorem}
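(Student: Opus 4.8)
The plan is to reduce the statement to a case analysis on the indecomposable finitely generated Gorenstein-projective $\A$-module $V$, using the structure theory of monomial algebras without overlap together with Theorem~\ref{thm1}. First I would dispose of the projective case: if $V$ is projective, then $\Ext_\A^1(V,V)=0$, so by the remark preceding Lemma~\ref{lemma3} (i.e.\ \cite[Remark 2.1]{bleher15}) the versal deformation ring $R(\A,V)$ is universal and isomorphic to $\k$. Hence from now on I may assume $V$ is indecomposable and \emph{non}-projective, so by \cite[Thm. 4.1]{chen-shen-zhou} we have $V=\A p$ for some perfect path $p$ in $\A$.

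In the non-projective case, I would invoke Remark~\ref{remmon}: since there is no overlap in $\A$, we have $\SEnd_\A(V)=\k$ and the two-sided description of $\Ext_\A^1(V,V)$ given in \eqref{ext2}. The hypothesis $\SEnd_\A(V)=\k$ puts us squarely in the setting of Theorem~\ref{thm1}, which guarantees that $V$ has a \emph{universal} deformation ring $R(\A,V)$, and also (via Theorem~\ref{thm1}(i)) that the deformation and weak deformation functors agree. So universality is automatic; what remains is to pin down the ring itself. Here I would use the standard fact that the tangent space of the deformation functor is $\Ext_\A^1(V,V)$, so that $R(\A,V)$ is a quotient of $\k[\![t_1,\dots,t_r]\!]$ where $r=\dim_\k\Ext_\A^1(V,V)$; by \eqref{ext2}, $r\le 1$.

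This leaves exactly two cases. If $\Ext_\A^1(V,V)=0$, the tangent space is zero, so $R(\A,V)\cong\k$. If $\Ext_\A^1(V,V)=\k$ (which by \eqref{ext2} happens precisely when $V=\Omega V$ in $\A\textup{-\underline{Gproj}}$), then $R(\A,V)$ is a quotient of $\k[\![t]\!]$, hence either $\k$, or $\k[\![t]\!]/(t^n)$ for some $n\ge 2$, or $\k[\![t]\!]$ itself. To rule out everything except $\k[\![t]\!]/(t^2)$, I would exhibit an explicit lift of $V$ over $\k[\![t]\!]/(t^2)$ that is \emph{not} the trivial lift — equivalently, a non-split self-extension realized integrally — which forces the map $R(\A,V)\to\k[\epsilon]$ to be nonzero, so $R(\A,V)$ surjects onto $\k[\epsilon]$; combined with $R(\A,V)$ being a quotient of $\k[\![t]\!]$ this already shows $R(\A,V)=\k[\![t]\!]/(t^n)$ with $n\ge 2$. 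Then I would show the lift over $\k[\epsilon]$ does \emph{not} extend to $\k[\![t]\!]/(t^3)$, which kills $n\ge 3$ and $\k[\![t]\!]$. The non-extendability is where the no-overlap hypothesis does its real work: the conditions (O1)--(O2) are precisely what would be needed to concatenate $p$ with itself into a longer non-zero path, and their failure is what obstructs lifting the period-two self-extension to second order. This last step — carefully computing the obstruction in $\Ext_\A^2(V,V)$ (or directly, with the explicit module $\A p$ and its syzygy) and showing it is nonzero exactly when no overlap occurs — is the main obstacle; the rest is formal.
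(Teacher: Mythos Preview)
Your reduction---dispose of projectives, invoke Remark~\ref{remmon} to get $\SEnd_\A(V)=\k$ and the dichotomy \eqref{ext2}, conclude universality from Theorem~\ref{thm1}, read off $r=\dim_\k\Ext^1_\A(V,V)\le 1$, and then in the case $\Omega V=V$ exhibit the non-trivial lift over $\k[\epsilon]$ via the syzygy sequence---matches the paper exactly. The divergence is only in the final non-extendability step.

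You propose to block the extension to $\k[\![t]\!]/(t^3)$ by computing the obstruction class in $\Ext_\A^2(V,V)$ (or by a direct calculation with $\A p$), and you locate the ``real work'' of the no-overlap hypothesis here, reasoning that (O1)--(O2) would allow concatenating $p$ with itself. This is a misreading of where the hypothesis is spent. The no-overlap assumption is consumed entirely in Remark~\ref{remmon}; once you know $\Omega V=V$, $\SEnd_\A(V)=\k$, and $V$ is non-projective Gorenstein-projective, the non-extendability is \emph{general} and needs nothing further about perfect paths or overlaps. The paper's argument runs as follows: if a lift $M_0$ over $\k[\![t]\!]/(t^3)$ existed, then $M_0/t^2M_0\cong P(V)$ and $t^2M_0\cong V$, so one has a short exact sequence $0\to V\to M_0\to P(V)\to 0$ of $\A$-modules, which splits because $P(V)$ is projective. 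Writing $M_0=V\oplus P(V)$ and analyzing the $t$-action forces $V$ to be a direct summand of $P(V)$, contradicting non-projectivity.

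So your obstruction-theoretic route is in principle available (indeed $\Ext_\A^2(V,V)\cong\SHom_\A(\Omega^2V,V)=\SHom_\A(V,V)=\k$ by Lemma~\ref{lemma:5.3}), but actually pinning down the obstruction class as nonzero is harder than necessary, and the rationale you give for why it should be nonzero is off. The paper's direct module-theoretic contradiction is both shorter and more conceptual: it shows that for \emph{any} finite-dimensional algebra and any non-projective Gorenstein-projective $V$ with $\Omega V=V$ and $\SEnd_\A(V)=\k$, one has $R(\A,V)\cong\k[\![t]\!]/(t^2)$.
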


\begin{proof}
Assume that $\A$ is a monomial algebra in which there is no overlap, and let $V$ be a finitely generated indecomposable Gorenstein-projective left $\A$-module. If $\Ext_\A^1(V,V)=0$, then it follows by \cite[Remark 2.1]{bleher15} that the versal deformation ring $R(\A,V)$ is universal and isomorphic to $\k$. Assume next that $\Ext_\A^1(V,V)\not=0$. By Remark \ref{remmon}, this means that $\Omega V=V$. By (\ref{ext2}), we obtain $\Ext_\A^1(V,V)\cong \k$, which implies that $R(\A,V)$ is isomorphic to a quotient algebra of $\k[\![t]\!]$.  Consider then the corresponding short exact sequence of left $\A$-modules
\begin{equation*}
0\to V \xrightarrow{\iota_V} P(V)\xrightarrow{\pi_V} V \to 0, 
\end{equation*}
where $\pi_V:P(V)\to V$ is a projective cover of $V$. 
It follows that $P(V)$ defines a non-trivial lift of $V$ over the ring of dual numbers $\k[\![t]\!]/(t^2)$, where the action of $t$ is given by $\iota_V \circ \pi_V$. This implies that there exists a unique surjective $\k$-algebra morphism $\psi: R(\A, V)\to \k[\![t]\!]/(t^2)$ in $\hat{\Ca}$ corresponding to the deformation defined by $P(V)$. We need to show that $\psi$ is an isomorphism. Suppose otherwise. Then there exists a surjective $\k$-algebra homomorphism $\psi_0:R(\A,V)\to \k[\![t]\!]/(t^3)$ in $\hat{\Ca}$ such that $\pi'\circ \psi_0=\psi$, where $\pi':\k[\![t]\!]/(t^3)\to \k[\![t]\!]/(t^2)$ is the natural projection.  Let $M_0$ be a $\k[\![t]\!]/(t^3)\A$-module which defines a lift of $V$ over $\k[\![t]\!]/(t^3)$ corresponding to $\psi_0$.  Let $(U(\A,V),\phi_{U(\A,V)})$ be a lift of $V$ over $R(\A,V)$ that defines the universal deformation of $V$. Then $M_0\cong \k[\![t]\!]/(t^3)\otimes_{R(\A,V), \psi_0}U(\A,V)$. Note that $M_0/tM_0\cong V$ as $\A$-modules. On the other hand, we also have that 
\begin{align*}
P(V)&\cong \k[\![t]\!]/(t^2)\otimes_{R(\A,V), \psi}U(\A,V)\\
&\cong \k[\![t]\!]/(t^2)\otimes_{\k[\![t]\!]/(t^3), \pi'}(\k[\![t]\!]/(t^3)\otimes_{R(\A,V), \psi_0}U(\A,V))\\
&\cong \k[\![t]\!]/(t^2)\otimes_{\k[\![t]\!]/(t^3),\pi'}M_0.
\end{align*}
Note that since $\ker \pi'=(t^2)/(t^3)$, we have $\k[\![t]\!]/(t^2)\otimes_{\k[\![t]\!]/(t^3),\pi'}M_0\cong M_0/t^2M_0$. Thus $P(V)\cong M_0/t^2M_0$ as $\k[\![t]\!]/(t^2)\A$-modules. Consider the surjective $\k[\![t]\!]/(t^3)\A$-module homomorphism $g:M_0\to t^2M_0$ defined by $g(x)=t^2x$ for all $x\in M_0$. Since $M_0$ is free over $\k[\![t]\!]/(t^3)$, it follows that $\ker g =tM_0$ and thus $M_0/tM_0\cong t^2M_0$, which implies that $V\cong t^2M_0$. Hence we get a short exact sequence of $\k[\![t]\!]/(t^3)\A$-modules 
\begin{equation}\label{ext4}
0\to V\to M_0\to P(V)\to 0.
\end{equation} 
Since $P(V)$ is a projective left $\A$-module, it follows that (\ref{ext4}) splits as a short exact sequence of $\A$-modules. Hence $M_0=V\oplus P(V)$ as $\A$-modules. Writing elements of $M_0$ as $(u,v)$  where $u\in V$ and $v\in P(V)$, the $t$-action on $M_0$ is given as $t(u,v)=(\mu(v), tv)$ for some surjective $\A$-module homomorphism $\mu: P(V)\to V$. Using $t^2v=0$ for all $v\in P(V)$, we obtain that $t^2(u,v)=(\mu(tv),0)$ for all $u\in V$ and $v\in P(V)$. Since $t^2M_0\cong V$, this means that the restriction of $\mu$ to $tP(V)$ has to define an isomorphism $\tilde{\mu}:tP(V)\to V$. Therefore, $\tilde{\mu}^{-1}$ provides a $\A$-module homomorphism splitting of $\mu$, which shows that $V$ is isomorphic to a direct summand of $P(V)$. But then $V$ is itself projective, which contradicts our assumption that $\Ext_\A^1(V,V)\not=0$. Thus $\psi$ is a $\k$-algebra isomorphism and $R(\A,V)\cong \k[\![ t]\!]/(t^2)$. 
This finishes the proof of Theorem \ref{thmmon}.
\end{proof}

\begin{remark}
If $\A$ is a quadratic monomial algebra, then there is no overlap in $\A$ since all perfect paths are  arrows. Therefore Theorem \ref{thmmon} applies to quadratic monomial algebras, and consequently to gentle algebras. 
\end{remark}

\begin{example}\label{exam1}
Let $\A=\k Q/\langle \rho\rangle$ and $\Gamma=\k Q'/\langle \rho'\rangle$ be the $\k$-algebras whose quivers with relations are given as follows:
\begin{align*}
Q&: \xymatrix@1@=20pt{\underset{1}{\bullet}\ar@/^/[r]^{\alpha}&\underset{2}{\bullet}\ar@/^/[l]^{\beta}},&& \rho=\{\beta\alpha\beta\alpha\},\\
Q'&: \xymatrix@1@=20pt{\underset{1'}{\bullet}\ar@/^/[r]^{x}&\underset{2'}{\bullet}\ar@/^/[l]^{y}\ar@(ur,dr)^{z}},&& \rho'=\{yx,zx,yz,z^2-xy\}.
\end{align*}
By \cite[\S 5]{liu-xi}, $\A$ and $\Gamma$ are stably equivalent of Morita type and their self-injective dimensions on both sides are equal to $2$. It follows that $\A$ and $\Gamma$ are Gorenstein $\k$-algebras that are singularly equivalent of Morita type in the sense of Definition \ref{defi:3.2}. One easily verifies that both of them are not self-injective and have infinite global dimension. On the other hand, by \cite[Example 5.10]{chen-shen-zhou}, $\A$ is a monomial algebra in which there is no overlap and $\beta\alpha$ is the unique perfect path in $\A$. It follows that $V=\A\beta\alpha$ is the unique (up to isomorphism) finitely generated indecomposable non-projective Gorenstein-projective left $\A$-module. 
Because  $\Omega V=V$, it follows by Theorem \ref{thmmon} that $R(\A,V)$ is universal and isomorphic to $\k[\![t]\!]/(t^2)$. Since $\A$ and $\Gamma$ are singularly equivalent of Morita type and since $\SEnd_\A(V)=\k$, it follows that there is a unique (up to isomorphism) finitely generated indecomposable non-projective Gorenstein-projective left $\Gamma$-module $W$, which (up to addition of projective modules) corresponds to $V$ under the singular equivalence of Morita type. By Theorem \ref{thm01} (iii), we should have that $R(\Gamma,W)\cong \k[\![t]\!]/(t^2)$.  We can verify this as follows. Since $\Gamma$ is special biserial, we have a description of all indecomposable $\Gamma$-modules (up to isomorphism), using strings and bands (see \cite{buri}). Denote by $P_{1'}$ and $P_{2'}$ the indecomposable projective left $\Gamma$-modules corresponding to the vertices of $Q'$, and consider $M[z^{-1}x]$ the string left $\Gamma$-module induced by the string representative $z^{-1}x$ for $\Gamma$. It is straightforward to show that $\Omega M[z^{-1}x]=M[z^{-1}x]$, which clearly implies that $\Omega^d M[z^{-1}x]= M[z^{-1}x]$ for all $d\geq 2$. Since $\Gamma$ is Gorenstein with injective dimension as a left $\Gamma$-module equal to $2$, it follows, using Lemma \ref{lemma:3.1} (iv), that $M[z^{-1}x]$ is a finitely generated indecomposable non-projective Gorenstein-projective left $\Gamma$-module. This means that $W=M[z^{-1}x]$. By using the description of the morphisms between string modules as in \cite{krause} together with Lemma \ref{lemma:5.3}, we obtain that $\SEnd_{\Gamma}(M[z^{-1}x])=\k$ and $\Ext_{\Gamma}^1(M[z^{-1}x],M[z^{-1}x])=\k$. This implies that $R(\Gamma, M[z^{-1}x])$ is universal and isomorphic to a quotient algebra of $\k[\![t]\!]$. By using the short exact sequence of left $\Gamma$-modules that defines $\Omega M[z^{-1}x]=M[z^{-1}x]$, namely,
\begin{equation*}
0\to M[z^{-1}x]\to P_{1'}\oplus P_{2'}\to M[z^{-1}x]\to 0, 
\end{equation*}
and by arguing as in the proof of Theorem \ref{thmmon}, we obtain that $R(\Gamma, M[z^{-1}x])\cong \k[\![t]\!]/(t^2)$. This illustrates Theorem \ref{thm01} (iii).
\end{example}

\begin{remark}\label{remfinal}
Let $\A$ be a finite dimensional $\k$-algebra and let $V$ be a left $\A$-module with finite dimension over $\k$ and such that $\SEnd_\A(V)=\k$. Assume that $\A$ is a basic self-injective $\k$-algebra. It follows from e.g. \cite[Prop. IV.3.9]{skow3} that $\A$ is also Frobenius. Then by Remark \ref{selfuni}, we have that the versal deformation rings $R(\A,V)$ and $R(\A, \Omega V)$ are both universal and isomorphic in $\hat{\Ca}$. In view of Theorem \ref{thm01} (ii), when $\A$ is an arbitrary finite dimensional $\k$-algebra and $V$ is a finitely generated Gorenstein-projective left $\A$-module, we have that both versal deformation rings $R(\A, V)$ and $R(\A, \Omega V)$ are universal. However, we do not know at this point if in general they are also isomorphic in $\hat{\Ca}$ or not.  Looking at the algebras $\A$ in Theorem \ref{thmmon} and Example \ref{exam1}, a natural question to ask seems to be whether this is true more generally, or at least when $\A$ is a Gorenstein $\k$-algebra.
\end{remark}
\section{Acknowledgments}  
This article was developed when the third author was a Visiting Associate Professor of Mathematics at the Instituto of Matem\'aticas at the Universidad de Antioquia in Medell{\'\i}n, Colombia during the summer of 2016. The third author would like to express his gratitude to the other authors, faculty members, staff and students at the Instituto of Matem\'aticas as well as to the other people related to this work at the 
Universidad de Antioquia for their hospitality and support during his visit. The authors want to express their gratitude to F. M. Bleher, who made some suggestions after reading earlier versions of this article, to X. W. Chen for sending the preprint \cite{chensun} to them, and to the anonymous referee, who provided many suggestions and corrections that helped the readability and quality of this work, and who also recommended to look at the article \cite{chen-shen-zhou}, which was used to prove Theorem \ref{thmmon}.

%\nocite{*}
\bibliographystyle{amsplain}
\bibliography{BekkertGiraldoVelezGorensteinRev7}

\providecommand{\bysame}{\leavevmode\hbox to3em{\hrulefill}\thinspace}
\providecommand{\MR}{\relax\ifhmode\unskip\space\fi MR }
% \MRhref is called by the amsart/book/proc definition of \MR.
\providecommand{\MRhref}[2]{%
  \href{http://www.ams.org/mathscinet-getitem?mr=#1}{#2}
}
\providecommand{\href}[2]{#2}
\begin{thebibliography}{10}

\bibitem{assem}
I.~Assem and A.~Skowro\'nski, \emph{Iterated tilted algebras of type
  {$\widetilde{\mathbb{A}}_n$}}, Math. Z. \textbf{195} (1987), no.~2, 269--290.

\bibitem{auslander4}
M.~Auslander and M.~Bridger, \emph{Stable {M}odule {T}heory}, Memoirs of the
  American Mathematical Society, no.~94, American Mathematical Society,
  Providence, R. I., 1969.

\bibitem{auslander2}
M.~Auslander and I.~Reiten, \emph{Applications of contravariantly finite
  subcategories}, Adv. in Math. \textbf{86} (1991), 111--152.

\bibitem{auslander3}
\bysame, \emph{Cohen-{M}acaulay and {G}orenstein {A}rtin algebras},
  Representation theory of finite groups and finite-dimensional algebras
  ({B}ielefeld, 1991) ({G}.~{O}. {M}ichler and {C}.~{M}. {R}ingel, eds.),
  Progr. Math., no.~95, Birkh{\"a}user, Basel, 1991, pp.~221--245.

\bibitem{auslander}
M.~Auslander, I.~Reiten, and S.~Smal{\o}, \emph{Representation theory of
  {A}rtin algebras}, Cambridge Studies in Advanced Mathematics, no.~36,
  Cambridge University Press, Cambridge, 1995.

\bibitem{avramov2}
L.~L. Avramov and {A.} Martsinkovsky, \emph{Absolute, relative, and {T}ate
  cohomology of modules of finite {G}orenstein dimension}, Proc. London Math.
  Soc. (3) \textbf{85} (2002), no.~2, 393--440.

\bibitem{bleher9}
F.~M. Bleher and S.~N. Talbott, \emph{Universal deformation rings of modules
  for algebras of dihedral type of polynomial growth}, Algebr. Represent.
  Theory \textbf{17} (2014), no.~1, 289--303.

\bibitem{blehervelez}
F.~M. Bleher and J.~A. V{\'e}lez-Marulanda, \emph{Universal deformation rings
  of modules over {F}robenius algebras}, J. Algebra \textbf{367} (2012),
  176--202.

\bibitem{blehervelez2}
\bysame, \emph{Deformations of complexes for finite dimensional algebras}, J.
  Algebra \textbf{491} (2017), 90--140.

\bibitem{bleher15}
F.~M. Bleher and D.~J. Wackwitz, \emph{Universal deformation rings and
  self-injective {N}akayama algebras}, J. Pure Appl. Algebra \textbf{223}
  (2019), no.~1, 218--244.

\bibitem{broue}
M.~Brou{\'e}, \emph{Equivalences of blocks of group algebras},
  Finite-dimensional algebras and related topics (Ottawa, ON, 1992) ({V}. Dlab
  and {L}.~{L}. Scott, eds.), {NATO} Adv. Sci. Inst. Ser. C Math. Phys. Sci.,
  no. 424, Kluwer Acad. Publ., Dordrecht, 1994, pp.~1--26.

\bibitem{buchweitz}
R.~O. Buchweitz, \emph{Maximal {C}ohen-{M}acaulay modules and
  {T}ate-{C}ohomology over {G}orenstein rings}, Preprint. Available in
  \href{http://hdl.handle.net/1807/16682}{http://hdl.handle.net/1807/16682},
  1987.

\bibitem{buri}
M.~C.~R. Butler and C.~M. Ringel, \emph{{A}uslander-{R}eiten sequences with few
  middle terms and applications to string algebras}, Comm. Algebra \textbf{15}
  (1987), no.~1-2, 145--179.

\bibitem{chen-shen-zhou}
X.~W. Chen, D.~Shen, and G.~Zhou, \emph{The {G}orenstein-projective modules
  over a monomial algebra}, Proc. Roy. Soc. Edinburgh Sect. A. \textbf{148}
  (2018), no.~6, 1115--1134.

\bibitem{chensun}
X.~W. Chen and L.~G. Sun, \emph{Singular equivalences of {M}orita type},
  Preprint, 2012.

\bibitem{curtis}
C.~W. Curtis and I.~Reiner, \emph{Methods of representation theory with
  applications to finite groups and orders}, vol.~I, John Wiley and Sons, New
  York, 1981.

\bibitem{enochs0}
E.~E. Enochs and O.~M.~G. Jenda, \emph{Gorenstein injective and projective
  modules}, Math. Z. \textbf{220} (1995), no.~4, 611--633.

\bibitem{enochs}
\bysame, \emph{Relative homological algebra}, De Gruyter Expositions in
  Mathematics, no.~30, Walter de Gruyter \& Co., Berlin, 2000.

\bibitem{happel}
D.~Happel, \emph{Triangulated categories in the representation theory of
  finite-dimensional algebras}, London Mathematical Society Lecture Notes
  Series, no. 119, Cambridge University Press, Cambridge, 1988.

\bibitem{happel3}
\bysame, \emph{{O}n {G}orenstein algebras}, Representation theory of finite
  groups and finite-dimensional algebras ({B}ielefeld, 1991) ({G}.~{O}.
  {M}ichler and {C}.~{M}. {R}ingel, eds.), Progr. Math., no.~95,
  Birkh{\"a}user, Basel, 1991, pp.~389--404.

\bibitem{holmH}
H.~Holm, \emph{Gorenstein homological dimensions}, J. Pure Appl. Algebra
  \textbf{189} (2004), no.~1--3, 167--193.

\bibitem{hoshino-koga}
M.~Hoshino and {H}. {K}oga, \emph{Zaks' lemma for coherent rings}, Algebr.
  Represent. Theor. \textbf{16} (2013), no.~6, 1647--1660.

\bibitem{keller}
B.~Keller, \emph{Hochschild cohomology and derived {P}icard groups}, J. Pure
  Appl. Algebra \textbf{190} (2004), no.~1--3, 177--196.

\bibitem{krause}
H.~Krause, \emph{Maps between tree and band modules}, J. Algebra \textbf{137}
  (1991), no.~1, 186--194.

\bibitem{krause3}
\bysame, \emph{Localization theory for triangulated categories}, Triangulated
  categories (T.~Holm, P.~J{\o}rgensen, and R.~Rouquier, eds.), London Math.
  Soc. Lecture Note Ser., no. 375, Cambridge Univ. Press, Cambridge, 2010,
  pp.~476--552.

\bibitem{liu-xi}
Y.~Liu and C.~Xi, \emph{Constructions of stable equivalences of {M}orita type
  for finite-dimensional algebras {III}}, J. London Math. Soc. \textbf{76}
  (2007), no.~2, 567--585.

\bibitem{quillen}
D.~Quillen, \emph{Higher algebraic {K}-theory, {I}: {H}igher {K}-theories
  ({P}roc. {C}onf., {B}attelle {M}emorial {I}nst., {S}eattle, {W}ash., 1972)},
  Lecture Notes in Math., no. 341, Springer, Berlin, 1973.

\bibitem{rickard1}
J.~Rickard, \emph{Derived categories and stable equivalence}, J. Pure Appl.
  Algebra \textbf{61} (1989), no.~3, 303--317.

\bibitem{sch}
M.~Schlessinger, \emph{Functors of {A}rtin rings}, Trans. Amer. Math. Soc.
  \textbf{130} (1968), 208--222.

\bibitem{skart}
{\O}.~Skarts{\ae}terhagen, \emph{Singular equivalence and the ({F}g)
  condition}, J. Algebra \textbf{452} (2016), 66--93.

\bibitem{skow3}
A.~Skowro{\'n}ski and K.~Yamagata, \emph{Frobenius algebras {I}. {B}asic
  representation theory}, EMS {T}extbooks in {M}athematics, European
  Mathematical Society (EMS), Z{\"u}rich, 2011.

\bibitem{velez}
J.~A. V{\'e}lez-Marulanda, \emph{Universal deformation rings of strings modules
  over a certain symmetric special biserial algebra}, Beitr. Algebra Geom.
  \textbf{56} (2015), no.~1, 129--146.

\bibitem{verdier}
J.~Verdier, \emph{Cat{\'e}gories d{\'e}riv{\'e}es: quelques r{\'e}sultats
  ({\'e}tat 0). ({F}rench)}, Cohomologie {\'e}tale: {S}{\'e}minaire de
  {G}{\'e}om{\'e}trie {A}lg{\'e}brique du {B}ois-{M}arie {SGA} 4 1/2
  (P.~Deligne, ed.), Lecture Notes in Math., no. 569, Springer, Berlin, 1977,
  pp.~262--311.

\bibitem{wang}
Z.~Wang, \emph{Singular equivalence of {M}orita type with level}, J. Algebra
  \textbf{439} (2015), 245--269.

\bibitem{weibel}
C.~A. Weibel, \emph{An introduction to homological algebra}, Cambridge Studies
  in Advanced Mathematics, no.~38, Cambridge University Press, Cambridge, 1994.

\bibitem{zhouzimm}
G.~Zhou and A.~Zimmermann, \emph{On singular equivalences of {M}orita type}, J.
  Algebra \textbf{385} (2013), 64--79.

\end{thebibliography}

\end{document}